\newtheorem{sats}{Theorem}[section]
\newtheorem{sats*}{Theorem}
\newtheorem{lem}[sats]{Lemma}
\newtheorem{cor}[sats]{Corollary}
\newtheorem{prop}[sats]{Proposition}
\newcommand{\R}{\mathbbm{R}}
\newcommand{\C}{\mathbbm{C}}
\newcommand{\Z}{\mathbbm{Z}}
\newcommand{\Q}{\mathbbm{Q}}
\newcommand{\ellL}{\mathcal{L}}
\newcommand{\cc}{ \mathrm{cc}}
\newcommand{\ch}{ \mathrm{ch}}
\newcommand{\id}{ \mathrm{id}}
\newcommand{\rd}{\mathrm{d}}
\newcommand{\Ko}{\mathcal{K}}
\newcommand{\Bo}{\mathcal{B}}
\newcommand{\He}{\mathcal{H}}
\newcommand{\Eg}{\mathcal{E}}
\newcommand{\ind}{\mathrm{i} \mathrm{n} \mathrm{d} \,}
\newcommand{\Hom}{\mathrm{Hom}}
\newcommand{\End}{\mathrm{End}}
\renewcommand{\epsilon}{\varepsilon}
\renewcommand{\phi}{\varphi}
\newcommand{\sign}{\mathrm{sign} }
\newcommand{\e}{\mathrm{e}}
\newcommand{\suptra}{\mathrm{st}\mathrm{r}}
\newcommand{\tra}{\mathrm{t}\mathrm{r}}
\title{Analytic formulas for topological degree of 
non-smooth mappings: the even-dimensional case}
\author{Magnus Goffeng}
\date{Department of Mathematical Sciences, Division of Mathematics\\
Chalmers university of Technology and University of Gothenburg}
\begin{document}
\maketitle

\bibliographystyle{amsplain}

\begin{abstract}
Topological degrees of continuous mappings between oriented manifolds of even dimension are studied in terms of index theory of pseudo-differential operators. The index formalism of non-commutative geometry is used to derive analytic integral formulas for the index of a $0$:th order pseudo-differential operator twisted by a H\"older continuous complex vector bundle. The index formula gives an analytic formula for the degree of a H\"older continuous mapping between even-dimensional oriented manifolds. The paper is an independent continuation of the paper \emph{Analytic formulas for topological degree of non-smooth mappings: the odd-dimensional case}.
\end{abstract}
\small \emph{Keywords}: Index theory, cyclic cohomology, mapping degrees, H\"older continuous symbols
\normalsize

\section*{Introduction}

This paper is an independent continuation of the paper \cite{mgoffodd} where the degree of a mapping from the boundary of a strictly pseudo-convex domain was given in terms of an explicit integral formula involving the Szeg\"o kernel as well as the more computable Henkin-Ramirez kernel. In this paper analytic formulas are given for general even-dimensional oriented manifolds in terms of the signature operator. The classical approach to mapping degree is to define in an abstract way the degree of a continuous mapping between two compact connected oriented manifolds of the same dimension in terms of cohomology. If the function $f$ is differentiable, an analytic formula for the degree can be derived using Brouwer degree, see \cite{reyeche}, or the more global picture of de Rham cohomology. Without differentiability conditions on $f$, the only known analytic degree formula beyond \cite{mgoffodd} is a formula of Connes which only holds in one dimension, see more in Chapter III.$2.\alpha$ of \cite{connes}. Our aim is to find another formula for the degree, that is valid for a H\"older continuous function, by expressing the degree as the index of a pseudo-differential operator and using the approach of \cite{connes} and \cite{mgoffodd}. 

Throughout the paper we will use the idea that the Chern character extracts cohomological information of a continuous mapping $f:X\to Y$ between even dimensional manifolds from the induced mapping $f^*:K^0(Y)\to K^0(X)$. The $K$-theory is a topological invariant and the picture of the index mapping as a pairing in a local cohomology theory via Chern characters in the Atiyah-Singer index theorem can be applied to more general classes of functions than the smooth functions. The cohomology theory present throughout all the index theory is the cyclic homology. For a H\"older continuous mapping $f:X\to Y$ of exponent $\alpha$ and an elliptic differential operator $A$ this idea can be read out from the commutativity of the diagram:
\begin{equation}
\label{holdiaeven}
\begin{CD}
K_0(C^\infty(Y))@>f^*>>K_0(C^\alpha(X))@>\ind_A >> \Z \\
@VV\ch_YV @VV\ch_{X}V @VVV\\
HC_{even}(C^\infty(Y))@>f^*>>HC_{even}(C^\alpha(X)) @>\tilde{\mu}_{A} >> \C \\
\end{CD} 
\end{equation}
where the mapping $\tilde{\mu}_{A}:HC_{even}(C^\alpha(X))\to \C$ is a cyclic cocycle on $C^\alpha(X)$ defined as the Connes-Chern character of the bounded $K$-homology class that $A$ defines, see more in \cite{connesncdg} and \cite{connes}. The right-hand side of the diagram \eqref{holdiaeven} is commutative by Connes' index formula, see Proposition $4$ of Chapter IV$.1$ of \cite{connes}. The dimension in which the Chern character will take values depends on the H\"older exponent $\alpha$. More explicitly, for $2n$-dimensional manifolds, the cocycle $\tilde{\mu}_{A}$ can be chosen as a cyclic $2k$-cocycle for any $k>n/\alpha$.

To describe this idea more explicitely, when $E\to X$ is a smooth complex vector bundle defined by the smooth projection-valued function $p:X\to \Ko$, where $\Ko$ denotes the $C^*$-algebra of compact operators on a complex separable Hilbert space, the index of the twisted differential operator $A_E:=p(A\otimes \id_\Ko)p$ can be calculated in terms of the de Rham cohomology using the Atiyah-Singer index formula as 
\[\ind A_E= \int_{T^*X}\pi^*\ch[E]\wedge \ch[A]\wedge \pi^*Td(X),\]
where $Td(X)$ denotes the Todd class of the complexified tangent bundle. In particular,  if $E\to Y$ is a vector bundle with fibers of complex dimension $N$ on an even-dimensional manifold $Y$ such that $\ch[E]$ only contains a constant term and a top-degree term and $f:X\to Y$ is smooth we can consider the vector bundle $f^*E\to X$. Naturality of the Chern character implies the identity 
\[\deg (f)\,\ch_0[A]\int_Y\ch_{Y}[E]=\ind A_{f^*E}-N\cdot\ind(A).\] 
In Theorem \ref{degsatseven}, we construct an explicit $2^{n-1}$-dimensional vector bundle $E_Y$ over a $2n$-dimensional compact oriented manifold $Y$ satisfying the above conditions together with the condition $\int_Y\ch_{Y}[E_Y]=1$. In the correct analytic setting the above degree formula extends to H\"older continuous functions. The analytic setting we choose in Theorem \ref{psumfred} is to associate a Fredholm module $(\tilde{\pi},\tilde{F}_A)$ with an elliptic differential operator $A$ of positive order. The Fredholm module $(\tilde{\pi},\tilde{F}_A)$ is $q$-summable over the algebra of H\"older continuous functions $C^\alpha(X)$, for any $q>\dim(X)/\alpha$. Thus the Connes-Chern character $\tilde{\mu}_A:=\cc_k(\tilde{\pi},\tilde{F}_A)$ is well defined for dimensions $2k>\dim(X)/\alpha$. In Theorem \ref{holderdegreeeven} we take $A$ to be the signature operator and show that if $f:X\to Y$ is a H\"older continuous mapping between two oriented $2n$-dimensional manifolds the following analytic degree formula holds: 
\[\deg (f)=2^{-n}\tilde{\mu}_{A}(\ch_{X}f^*([E_Y]-2^{n-1}[1])).\]

The drawback exhibited in \cite{mgoffodd}, where the results were restricted to boundaries of strictly pseudo-convex domains in Stein manifolds, is not present in this paper. The restriction that $X$ and $Y$ must be even-dimensional does not really pose a problem since when $X$ and $Y$ are odd-dimensional we can consider the mapping $f\times \id:X\times S^1\to Y\times S^1$ instead which is a mapping between even-dimensional manifolds and $\deg(f)=\deg(f\times \id)$. The drawback of the degree formula in Theorem \ref{holderdegreeeven} is that it is in general quite hard to calculate explicit integral kernels for pseudo-differential operators.  \\

\section{$K$-theory and Connes' index formula}

To formulate the calculation of mapping degrees in a setting fitting with non-smooth mappings, we need a framework for "differential geometry" where there are no classical differentials. The framework we will use is Alain Connes' non-commutative geometry, see \cite{connesncdg} and \cite{connes}. We will recall some basic concepts of non-commutative geometry in this section.

The $K$-theory of a compact topological space $Y$ is defined as the Grothendieck group of the abelian semigroup of isomorphism classes of complex vector bundles under direct sum. We will tacitly assume that all vector bundles are complex throughout the paper. The Serre-Swan theorem states a one-to-one correspondence between the isomorphism classes of vector bundles over a compact space $Y$ and projection valued functions $p:Y\to \Ko$, see \cite{swan}. Here $\Ko$ denotes the $C^*$-algebra of compact operators on some separable, infinite dimensional Hilbert space. Following the Serre-Swan theorem, an equivalent approach to $K$-theory is to use equivalence classes of projections $p\in C(Y)\otimes \Ko$. The $K$-theory is denoted by $K_0(C(Y))$. To read more about $K$-theory, see \cite{blacker}. The formulation of $K$-theory in terms of projections can be defined for any algebra $\mathcal{A}$ as formal differences of equivalence classes of projections $p\in \mathcal{A}\otimes \Ko$. 

Clearly, the abelian group $K_0(\mathcal{A})$ depends covariantly on the algebra $\mathcal{A}$ so $K_0$ defines a functor. In particular, the functor $K_0$ has many properties making the $K$-theory of a $C^*$-algebra manageable to calculate, for instance; homotopy invariance, half exactness and stability under tensoring by a matrix algebra. Furthermore, a dense embedding of topological algebras $\mathcal{A}'\hookrightarrow \mathcal{A}$ which is isoradial induces an isomorphism on $K$-theory, see more in \cite{cumero}. For instance, if $Y$ is a compact manifold all of the embeddings $C^\infty(Y)\subseteq C^\alpha(Y)\subseteq C(Y)$ induce isomorphisms on $K$-theory. Here $C^\alpha(Y)$ denotes the algebra of H\"older continuous functions of exponent $\alpha\in]0,1]$. The isomorphism $K_0(C(Y))\cong K_0(C^\infty(Y))$ enables us to define the Chern character $\ch:K_0(C(Y))\to H^{even}_{dR}(Y)$ on classes $[p]\in K_0(C(Y))$ represented by a smooth $p:Y\to \Ko$ as
\[\ch[p]:=\sum_{j=0}^\infty \frac{1}{(2\pi i)^j j!}\tra (p\rd p\rd p)^j.\]
We choose the trace as the fiberwise operator trace in $\wedge^*T^*Y\otimes \Ko$ which is well defined since a compact projection is of finite rank. The term in the sum of degree $2j$ is denoted by $\ch_j[p]$. This definition of the Chern character is motivated by Chern-Weil theory since the curvature of the Levi-Civita connection on the vector bundle associated with a smooth self-adjoint projection $p$ is precisely $p\rd p\rd p$. The Chern character extends to an additive mapping on all formal differences in $K_0(C(Y))$ due to the universal property of the Grothendieck group.

However, we will need a Chern character defined on H\"older continuous projections. The cohomology theory fitting with index theory of more complicated geometries than smooth functions on smooth manifolds is cyclic homology. We will consider Connes' original definition of cyclic homology which simplifies the construction of the Chern character and the Chern-Connes character. We will let $\mathcal{A}$ denote a topological algebra and we will use the notation $\mathcal{A}^{\otimes k}$ for the $k$-th tensor power of $\mathcal{A}$. The Hochschild differential $b:\mathcal{A}^{\otimes k}\to \mathcal{A}^{\otimes k-1}$ is defined by 
\begin{align*}
b(x_0\otimes x_1\otimes \cdots\otimes x_k\otimes x_{k+1})&:=(-1)^{k+1}x_{k+1}x_0\otimes x_1\otimes \cdots \otimes x_k+\\
+\sum_{j=0}^{k}(-1)^j&x_0\otimes  \cdots \otimes  x_{j-1}\otimes  x_jx_{j+1}\otimes x_{j+2}\otimes \cdots \otimes x_{k+1}.
\end{align*}
The cyclic permutation operator $\lambda:\mathcal{A}^{\otimes k}\to \mathcal{A}^{\otimes k}$ is defined as
\[\lambda(x_0\otimes x_1\otimes \cdots \otimes x_k)=(-1)^kx_k\otimes x_0\otimes \cdots \otimes x_{k-1}.\]
We define a complex of $\C$-vector spaces $C^\lambda_*(\mathcal{A})$ by 
\[C^\lambda_k(\mathcal{A}):=\mathcal{A}^{\otimes k+1}/(1-\lambda)\mathcal{A}^{\otimes k+1},\]
with differential given by $b$. The homology of the complex $C^\lambda_*(\mathcal{A})$ is called the cyclic homology of $\mathcal{A}$ and will be denoted by $HC_*(\mathcal{A})$. A cycle in $C_k^\lambda(\mathcal{A})$ will be called a cyclic $k$-cycle.

The complex $C^k_\lambda(\mathcal{A})$ is defined as the space of continuous linear functionals $\mu$ on $\mathcal{A}^{\otimes k+1}$ such that $\mu\circ\lambda=\mu$. The Hochschild coboundary operator $\mu\mapsto \mu\circ b$ makes $C^*_\lambda(\mathcal{A})$ into a complex. The cohomology of the complex $C^*_\lambda(\mathcal{A})$ will be denoted by $HC^*(\mathcal{A})$ and is called the cyclic cohomology of $\mathcal{A}$. Cyclic cohomology is an algebraic generalization of de Rham homology. The difference lies in that the dimension defines a grading on the de Rham theories, while the dimension defines a filtration on the cyclic theories. This difference can be explained by a Theorem of Connes \cite{connes} stating that if $X$ is a compact oriented manifold, there is an isomorphism 
\begin{equation}
\label{cycde}
HC^k(C^\infty(X))\cong Z_k(X)\oplus \bigoplus_{j> 0} H^{dR}_{k-2j}(X),
\end{equation}
where $Z_k(X)$ denotes the space of closed $k$-currents on $X$. The filtration on cyclic cohomology can be described by the linear mapping $S:HC^k(\mathcal{A})\to HC^{k+2}(\mathcal{A})$ called the periodicity operator. For a definition of the periodicity operator, see \cite{connes}. 

To define the Chern character $\ch_{2k}:K_0(\mathcal{A})\to HC_{2k}(\mathcal{A})$ in degree $2k$ it is sufficient to define it on projections and extending to all $K$-theory classes using the universal property of the Grothendieck group. Following Proposition $3$ of Chapter III$.3$ of \cite{connes} we define the Chern character of a projection $p$ by 
\begin{equation}
\label{cycchern}
\ch_{2k}[p]:=(k!)^{-1}\tra\left(\underbrace{p\otimes p \otimes \cdots \otimes p\otimes p}_{2k+1 \quad\mbox{factors}}\right).
\end{equation}
The additive pairing between $HC^{2k}(\mathcal{A})$ and a $K$-theory element $x\in K_0(\mathcal{A})$ is defined by 
\[\langle \mu, x\rangle_{2k}:=\mu.\ch_{2k}[x].\]
The choice of normalization implies that for a cohomology class in $HC^{2k}(\mathcal{A})$ represented by the cyclic cocycle $\mu$, the pairing satisfies
\[\langle S\mu, x\rangle_{2k+2}=\langle \mu, x\rangle_{2k},\]
see Proposition $3$ in Chapter III$.3$ of \cite{connes}.

The homology theory dual to $K$-theory is $K$-homology. Analytic $K$-homology is described by Fredholm modules, a theory fitting well with index theory and cyclic cohomology. For $q\geq 1$, let $\ellL^q(\He)\subseteq \Bo(\He)$ denote the ideal of Schatten class operators on a separable Hilbert space $\He$, so $T\in \ellL^q(\He)$ if and only if $\tra((T^*T)^{q/2})<\infty$. For $q\neq 2$ there is no exact description for an integral operator to belong to the Schatten class of order $q$. However, for $q>2$ there exists a convenient sufficient condition on the kernel, found in \cite{russo}. We will return to this subject a little later. 

A graded Hilbert space is a Hilbert space $\He$ equipped with an involutive mapping $\gamma$, that is, $\gamma^2=1$. While $\gamma$ is an involution, we can decompose $\He=\He_+\oplus \He_-$, where $\He_\pm=\ker(\gamma\mp1)$. An operator $T$ on $\He$ is called even if $T\He_\pm\subseteq \He_\pm$ and odd if $T\He_\pm\subseteq \He_\mp$. Suppose that $\He$ is a graded Hilbert space and $\pi:\mathcal{A}\to \Bo(\He)$ is an even representation of a trivially graded $\C$-algebra $\mathcal{A}$. If  $F\in \Bo(\He)$ is an odd operator such that 
\begin{equation}
\label{fredprop}
F^2=1,\; F=F^*\quad \mbox{and}\quad [F,\pi(a)]\in \ellL^{q}(\He) \;\forall a\in \mathcal{A},
\end{equation}
the pair $(\pi,F)$ is called a $q$-summable even Fredholm module. The conditions $F^2=1$ and $F=F^*$ simplifies many calculations, but in practice it is sufficient if they hold modulo $q$-summable operators. If we decompose the graded Hilbert space $\He=\He_+\oplus \He_-$ into its even and odd part, the odd operator $F$ decomposes as:
\[F=
\begin{pmatrix}
0& F_+\\
F_-& 0 \end{pmatrix}\]
where $F_+:\He_-\to \He_+$ and $F_-:\He_+\to \He_-$. Similarly we can decompose $\pi=\pi_+\oplus \pi_-$ where $\pi_\pm:\mathcal{A}\to \Bo(\He_\pm)$ are representations. The first and second condition in \eqref{fredprop} are equivalent to the conditions $F_+=F_-^{-1}=F_-^*$ and the commutator condition is equivalent to 
\[F_-\pi_+(a)-\pi_-(a)F_-\in \ellL^{q}(\He_+,\He_-)\quad\mbox{and}\quad F_+\pi_-(a)-\pi_+(a)F_+\in \ellL^{q}(\He_-,\He_+).\] 
If the pair $(\pi,F)$ satisfies the requirement in equation \eqref{fredprop} but with $\ellL^q(\He)$ replaced by $\Ko(\He)$ the pair $(\pi,F)$ is a bounded even Fredholm module. The set of homotopy classes of bounded even Fredholm modules forms an abelian group under direct sum called the even analytic $K$-homology of $\mathcal{A}$ and is denoted by $K^0(\mathcal{A})$. For a more thorough presentation of Fredholm modules, e.g. Chapter VII and VIII of \cite{blacker}.

Following Definition $3$ of Chapter IV$.1$ of \cite{connes} we define the Connes-Chern character $\cc_{2k}(\pi,F)$ of a $q$-summable even Fredholm module $(\pi,F)$ for $2k\geq q$ as the cyclic $2k$-cocycle: 
\[\cc_{2k}(\pi,F)(a_0,a_1,\ldots, a_{2k}):=
(-1)^{k}k!\;\suptra (\pi(a_0)[F,\pi(a_1)]\cdots [F,\pi(a_{2k})]),\]
where $\suptra(T):=\tra(\gamma T)$ for $T\in \ellL^1(\He)$ and $\gamma$ denotes the grading on $\He$. This choice of normalization leads to $S\cc_{2k}(\pi,F)=cc_{2k+2}(\pi,F)$, see Proposition $2$ of Chapter IV$.1$ of \cite{connes}.

If $(\pi,F)$ is a bounded Fredholm module over $\mathcal{A}$ we can define the index mapping $\ind\!_F:K_0(\mathcal{A})\to \Z$, again by defining an additive mapping on the semigroup of projections, as 
\[\ind\!_F[p]:=
\ind((\pi_+\otimes \id)(p)F_+(\pi_-\otimes \id)(p)),\]
where we represent $[p]$ by a finite-dimensional projection $p\in \mathcal{A}\otimes \Ko(\C^N)$ and we consider $(\pi_+\otimes \id)(p)F_+(\pi_-\otimes \id)(p)$ as an operator 
\[(\pi_+\otimes \id)(p)F_+(\pi_-\otimes \id)(p):(\pi_-\otimes \id)(p)\left(\He_-\otimes \C^N\right)\to (\pi_+\otimes \id)(p)\left(\He_+\otimes\C^N\right).\]
The association $[p]\times (\pi,F)\mapsto \ind\!_F(p)$ is homotopy invariant and defines a bilinear pairing $K_0(\mathcal{A})\times K^0(\mathcal{A})\to \Z$ which is non-degenerate after tensoring with $\Q$, see more in \cite{blacker}. To simplify the notation, we suppress the dimension $N$ and identify $(\pi,F)$ with the Fredholm module $(\pi\otimes  \id_{\Ko(\C^N)},F\otimes\id_{\C^N})$.

\begin{sats}[Proposition $4$ of Chapter IV$.1$ of \cite{connes}]
\label{connesindexeven}
If $(\pi,F)$ is a $q$-summable even Fredholm module satisfying the conditions \eqref{fredprop} and $2k\geq q$ the index mapping $\ind\!_F$ can be calculated as
\[\ind\!_F[p]=\langle\cc_k(\pi,F), p\rangle_k.\]
\end{sats}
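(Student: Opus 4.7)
Plan: The plan is to reduce the identity to a direct algebraic computation on the graded Hilbert space $\He$, combined with Calder\'on's formula for the Fredholm index. Following the convention adopted in the paragraph preceding the theorem, I would suppress the auxiliary matrix algebra and take $p\in\mathcal{A}$ to be a projection; setting $P:=\pi(p)$ with blocks $P_\pm:=\pi_\pm(p)$, the quantity to compute is by definition $\ind\!_F[p]=\ind(T)$ for the operator $T:=P_+F_+P_-:P_-\He_-\to P_+\He_+$, with natural candidate parametrix $T':=P_-F_-P_+$.

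The first step is the purely algebraic identity
\[
P[F,P]^2 \;=\; [F,P]^2 P \;=\; PFPFP - P,
\]
which follows by expanding $[F,P]^2 = FPFP - FPF - P + PFPF$ using $F^2=1$ and $P^2=P$, and then multiplying by $P$ on either side so that the four terms collapse. In particular $P$ commutes with $[F,P]^2$, and consequently $P[F,P]^{2k} = (PFPFP - P)^k$.

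The second step is analytic. Because $\pi$ is even and $F$ is odd, the operator $PFPFP$ is block diagonal in the grading $\He=\He_+\oplus\He_-$, acting as $TT'$ on $P_+\He_+$ and as $T'T$ on $P_-\He_-$ while vanishing on the orthogonal complements; the projection $P$ restricts to the identity on $P_\pm\He_\pm$. H\"older's inequality for Schatten ideals together with the hypothesis $[F,\pi(a)]\in\ellL^q(\He)$ and $2k\geq q$ gives $[F,P]^{2k}\in\ellL^1(\He)$, which both makes the supertrace well defined and ensures that $(1-TT')^k$ and $(1-T'T)^k$ are trace class; in particular $T$ is Fredholm with parametrix $T'$. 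Unpacking the supertrace through the grading then yields
\[
\suptra\bigl(P[F,P]^{2k}\bigr) \;=\; \tra_{P_+\He_+}\bigl((TT'-1)^k\bigr) - \tra_{P_-\He_-}\bigl((T'T-1)^k\bigr).
\]

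The final step is to apply Calder\'on's formula $\ind(T)=\tra((1-T'T)^k)-\tra((1-TT')^k)$, which combined with the previous display identifies $\suptra(P[F,P]^{2k})$ with $(-1)^{k+1}\ind(T)$. Substituting this into the pairing $\langle\cc_k(\pi,F),p\rangle_k$ via the normalizations defining $\ch_{2k}[p]=(k!)^{-1}\tra(p^{\otimes(2k+1)})$ and $\cc_{2k}(\pi,F) = (-1)^k k!\,\suptra(\pi(\cdot)[F,\pi(\cdot)]\cdots[F,\pi(\cdot)])$ produces the claimed equality $\ind\!_F[p] = \langle\cc_k(\pi,F),p\rangle_k$. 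I expect the principal obstacle to be careful sign and normalization bookkeeping, since small inconsistencies propagate across three separate definitions; the analytic ingredients (Schatten interpolation and Calder\'on's formula restricted to the subspaces $P_\pm\He_\pm$) are standard.
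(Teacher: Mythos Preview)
The paper does not prove this statement; it is quoted verbatim as Proposition~4 of Chapter~IV.1 of \cite{connes} and used as a black box. Your outline is precisely the standard argument one finds in Connes: the algebraic identity $P[F,P]^2=[F,P]^2P=PFPFP-P$, the consequence $P[F,P]^{2k}=(PFPFP-P)^k$, the block-diagonal reading in the grading, and Calder\'on's trace formula. So there is nothing to compare against; your route \emph{is} the proof being cited.

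There is, however, a real sign problem that you flag and then sweep under the rug. Following the normalizations \emph{exactly as written in this paper}, your own computation gives
\[
\langle\cc_k(\pi,F),p\rangle_k
=(-1)^k\,\suptra\bigl(P[F,P]^{2k}\bigr)
=(-1)^k\cdot(-1)^{k+1}\,\ind(T)
=-\ind\!_F[p],
\]
off by a global sign from the claimed equality. A one-line sanity check confirms this: take $\He_+=\He_-=\C$, $F_\pm=1$, $P_+=1$, $P_-=0$; then $[F,P]^2=-I$, so $\suptra(P[F,P]^{2k})=(-1)^k$ and the pairing equals $1$, whereas $T=P_+F_+P_-=0$ as a map $\{0\}\to\C$ has index $-1$. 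The discrepancy is not in your reasoning but in the paper's transcription of Connes' conventions (either the prefactor $(-1)^k k!$ in $\cc_{2k}$ or the choice of $F_+$ versus $F_-$ in the definition of $\ind\!_F$). Since the paper never re-derives the identity, this inconsistency is invisible downstream; but your write-up should acknowledge and resolve it rather than assert that the signs line up.
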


In Theorem \ref{connesindexeven}, the conditions \eqref{fredprop} on the Fredholm module $(\pi,F)$ requires some caution. If for instance, we remove the condition $F^2\neq 1$ one can choose $F$ such that $\ind (F_+)\neq 0$. On the other hand, if $\pi$ is unital then $\ind(F_+)=\ind\!_F(1)$ but $\langle\cc_k(\pi,F), 1\rangle_k=0$, therefore $\ind (F_+)\neq\langle\cc_k(\pi,F), 1\rangle_k$.

In the context of index theory, the periodicity operator $S$ plays the role of extending index formulas such as that in Theorem \ref{connesindexeven} to larger algebras. Suppose that $\mu$ is a cyclic $k$-cocycle on an algebra $\mathcal{A}_0$ which is a dense $*$-subalgebra of a $C^*$-algebra $\mathcal{A}$. The cyclic $k+2m$-cocycle $S^m\mu$ can sometimes be extended to a cyclic cocycle on a larger $*$-subalgebra $\mathcal{A}_0\subseteq \mathcal{A}'\subseteq \mathcal{A}$. In \cite{mgoffodd} the properties of $S^m\mu$ were studied for $\Omega$ being a strictly pseudo-convex domain in a Stein manifold of complex dimension $n$ and $\mu$ being the cyclic $2n-1$-cocycle on $\mathcal{A}_0=C^\infty(\partial\Omega)$ defined by 
\[\mu:=\sum_{k=0}^n S^k \omega_k,\]
where $\omega_k$ denotes the cyclic $2n-2k-1$-cocycle given by the Todd class $Td_k(\Omega)$ in degree $2k$ as
\[\omega_k(a_0,a_1,\ldots, a_{2n-2k-1}):=\int _{\partial\Omega}a_0 \rd a_1\wedge \rd a_2\wedge \cdots \wedge \rd a_{2n-2k-1}\wedge Td_k(\Omega).\]
It was proved in \cite{mgoffodd} that the cyclic cocycle $S^{m}\mu$ defines the same cyclic cohomology class on $C^\infty(\partial\Omega)$ as 
\begin{align}
\label{oddcycle}
\tilde{\mu}&(a_0,a_1,\ldots, a_{2n+2m-1}):=\\
&\nonumber:=\int_{\partial\Omega^{2n+2m-1}} \tra\left(a_0(z_0)\prod_{j=1}^{2n+2m-1}(a_j(z_{j})-a_j(z_{j-1}))K_{\partial\Omega}(z_{j-1},z_j)\right)\rd V,
\end{align}
where $K_{\partial\Omega}$ denotes the Szeg\"o kernel or the Henkin-Ramirez kernel. The cyclic cocycle $\tilde{\mu}$ is the \emph{odd} Connes-Chern character of the Toeplitz operators on the Hardy space and $\tilde{\mu}$ extends to a cyclic cocycle on the algebra of H\"older continuous functions on $\partial\Omega$. We will later on use a cyclic cocycle of the form $\mu=\cc_{2k}(\pi,F)$ and the periodicity operator to extend a formulation of the Atiyah-Singer index theorem to pseudo-differential operators twisted by a H\"older continuous vector bundle. 

The index formula of Theorem \ref{connesindexeven} holds for $q$-summable Fredholm modules and to deal with the $q$-summability of pseudo-differential operators we will need the following theorem of Russo \cite{russo} to give a sufficient condition for an integral operator to be Schatten class. Following \cite{bepa}, when $X$ is a $\sigma$-finite measure space and $1\leq p,q <\infty$, the mixed $(p,q)$-norm of a function $k:X\times X\to \C$ is defined by 
\[\|k\|_{p,q}:=\left( \int _X\left( \int_X|k(x,y)|^p\rd x \right) ^{\frac{q}{p}}\rd y\right) ^{\frac{1}{q}}.\]
The space of measurable functions with finite mixed $(p,q)$-norm is denoted by $L^{(p,q)}(X\times X)$. By Theorem $4.1$ of \cite{bepa} the space $L^{(p,q)}(X\times X)$ becomes a Banach space in the mixed $(p,q)$-norm which is reflexive if $1<p,q<\infty$. If a bounded operator $K$ has integral kernel $k$, the hermitian conjugate $K^*$ has integral kernel $k^*(x,y):=\overline{k(y,x)}$. 

\begin{sats}[Theorem $1$ in \cite{russo}]
\label{russothmeven}
Suppose that $K:L^2(X)\to L^2(X)$ is a bounded operator given by an integral kernel $k$. If $2<q<\infty$ 
\begin{equation}
\label{russoesteven}
\|K\|_{\ellL^q(L^2(X))}\leq (\|k\|_{q',q}\|k^*\|_{q',q})^{1/2},
\end{equation}
where $q'=q/(q-1)$.
\end{sats}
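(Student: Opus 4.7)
The strategy is complex interpolation between the two classical endpoints $q=2$ and $q=\infty$. At $q=2$ the claim reduces to the Hilbert--Schmidt identity $\|K\|_{\ellL^2(L^2(X))}=\|k\|_{L^2(X\times X)}=\|k\|_{2,2}$; since $\|k^*\|_{2,2}=\|k\|_{2,2}$, the estimate \eqref{russoesteven} holds with equality at this endpoint. At $q=\infty$ (so $q'=1$) the estimate becomes the symmetric Schur test
\[\|K\|_{\Bo(L^2(X))}\leq (\|k\|_{1,\infty}\|k^*\|_{1,\infty})^{1/2},\]
which I would prove by a double Cauchy--Schwarz: for $f\in L^2(X)$,
\[|Kf(x)|^2\leq \left(\int_X|k(x,y)|\rd y\right)\left(\int_X|k(x,y)||f(y)|^2\rd y\right)\leq \|k^*\|_{1,\infty}\int_X|k(x,y)||f(y)|^2\rd y,\]
and integrating in $x$ and applying Fubini gives $\|Kf\|_2^2\leq \|k\|_{1,\infty}\|k^*\|_{1,\infty}\|f\|_2^2$.

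With the endpoints in hand, I would interpolate the linear map $\Phi:k\mapsto K$ at $\theta=(q-2)/q\in(0,1)$. Classical complex interpolation of Schatten ideals gives $[\ellL^2,\Bo]_\theta=\ellL^q$, while Calder\'on's formula for mixed-norm Lebesgue spaces (see \cite{bepa}) gives $[L^{(2,2)},L^{(1,\infty)}]_\theta=L^{(q',q)}$, since the interpolated exponents satisfy $1/p_\theta=(1-\theta)/2+\theta=1/q'$ and $1/q_\theta=(1-\theta)/2=1/q$. Thus $\Phi$ is bounded $L^{(q',q)}\to \ellL^q$.

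To obtain the precise geometric-mean constant $(\|k\|_{q',q}\|k^*\|_{q',q})^{1/2}$ rather than some asymmetric $C\|k\|_{q',q}$, I would carry out the interpolation as Stein interpolation for an analytic family of operators on the strip $0\leq \Re z\leq 1$, arranged so that at both boundary lines $\Re z=0$ and $\Re z=1$ the endpoint estimates appear in their symmetric product form; the three-lines theorem applied to $\log\|\Phi_z\|$ then delivers \eqref{russoesteven} on the interior vertical line $\Re z=\theta$. This symmetrization is the main obstacle, since a plain Riesz--Thorin interpolation using a single norm on $k$ cannot yield a geometric-mean bound, and no universal asymmetric estimate $\|K\|_{\ellL^q}\leq C\|k\|_{q',q}$ holds to which one could separately apply $K\mapsto K^*$ in order to reconstruct the symmetric factor.
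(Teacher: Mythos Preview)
The paper does not give its own proof of this statement: it is quoted from Russo \cite{russo}, with the sole added remark that Russo's hypothesis $k\in L^2(X\times X)$ is unnecessary, for which the reader is referred to the companion paper \cite{mgoffodd}. So there is no argument in this paper to compare against; your sketch has to stand on its own.

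Your overall strategy is the correct one and is, in outline, Russo's: complex interpolation between the Hilbert--Schmidt endpoint $q=2$ and the operator-norm endpoint $q=\infty$, the latter being the Schur test. Both endpoint bounds are stated and proved correctly, and your identification of the interpolated spaces $[\ellL^2,\Bo]_\theta=\ellL^q$ and $[L^{(2,2)},L^{(1,\infty)}]_\theta=L^{(q',q)}$ with $\theta=(q-2)/q$ is right.

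The gap is your final paragraph. You correctly observe that a plain Riesz--Thorin argument applied to the linear map $k\mapsto K$ cannot produce the geometric mean $(\|k\|_{q',q}\|k^*\|_{q',q})^{1/2}$, and that no one-sided bound $\|K\|_{\ellL^q}\leq C\|k\|_{q',q}$ holds (rank-one kernels $k(x,y)=\phi(x)\psi(y)$ with $\phi$ concentrated and $\psi$ spread out already break it). But the sentence ``Stein interpolation\ldots arranged so that at both boundary lines the endpoint estimates appear in their symmetric product form'' is a restatement of the goal, not a proof. The quantity $(\|k\|_{q',q}\|k^*\|_{q',q})^{1/2}$ is not a norm on any Banach space of kernels, so there is no target space into which one can interpolate $k\mapsto K$ directly; the symmetry has to be built into the analytic family by hand. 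One way to do this is to fix $k$, write $k=u|k|$ with $|u|=1$, set $k_z:=u|k|^{\alpha(z)}$ for a suitable affine $\alpha$, and apply the three-lines lemma to $z\mapsto \tra(K_zB_z)$ for a compatible analytic family $B_z$ coming from the duality $(\ellL^q)^*=\ellL^{q'}$; the Schur bound, already symmetric in $k$ and $k^*$, then controls the $\Re z=1$ boundary, while the Hilbert--Schmidt identity controls $\Re z=0$. Until such a family is actually written down and the boundary estimates verified, your argument is incomplete exactly at the point you yourself identify as the main obstacle.
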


In the statement of the Theorem in \cite{russo}, the completely unnecessary assumption $k\in L^2(X\times X)$ is made. For the discussion on how to remove the condition $k\in L^2(X\times X)$ and the proof of the next Theorem, we refer to \cite{mgoffodd}.

\begin{sats}
\label{russotracethmeven}
Suppose that $K_j:L^2(X)\to L^2(X)$ are operators with integral kernels $k_j$ for $j=1,\ldots, m$ such that $\|k_j\|_{q',q},\|k_j^*\|_{q',q}<\infty$ for certain $q>2$. Whenever $m\geq q$ the operator $K_1K_2\cdots K_m$ is a trace class operator and we have the trace formula
\[\tra(K_1K_2\cdots K_m)=\int _{X^m}\left(\prod_{j=1}^{m}k_{j}(x_j,x_{j+1})\right)\rd x_1\rd x_2\cdots \rd x_m,\]
where we identify $x_{m+1}$ with $x_1$.
\end{sats}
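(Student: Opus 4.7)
The plan is to combine Theorem \ref{russothmeven} with the H\"older inequality for Schatten ideals to secure the trace-class property, and then to establish the trace formula by approximation from the Hilbert--Schmidt case.

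First I would apply Theorem \ref{russothmeven} to each $k_j$ to obtain $K_j\in\ellL^q(L^2(X))$ with $\|K_j\|_{\ellL^q}\leq (\|k_j\|_{q',q}\|k_j^*\|_{q',q})^{1/2}<\infty$. The H\"older inequality for Schatten ideals, stating that $T_1\cdots T_m\in\ellL^p$ with norm bounded by the product of the $\ellL^{p_j}$-norms whenever $\sum_j p_j^{-1}=p^{-1}$, applied with $p_j=q$ for every $j$, yields $K_1\cdots K_m\in\ellL^{q/m}$. Since $m\geq q$ we have $q/m\leq 1$, so $\ellL^{q/m}\subseteq \ellL^1$ and the product is trace class.

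For the trace formula I would reduce to the case of smooth compactly supported kernels via density. When the $k_j$ are smooth and compactly supported, every $K_j$ is Hilbert--Schmidt, the composite $K_1\cdots K_m$ has the iterated kernel $\int_{X^{m-1}} k_1(x,x_2)k_2(x_2,x_3)\cdots k_m(x_m,y)\,\rd x_2\cdots \rd x_m$, and the classical trace formula for a trace-class operator with continuous kernel then produces the claimed identity by integrating along the diagonal $y=x$. To upgrade to general $k_j$, I would pick sequences of smooth compactly supported $k_j^{(n)}$ converging to $k_j$ in both $\|\cdot\|_{q',q}$ and $\|(\cdot)^*\|_{q',q}$; such density holds for $1<q<\infty$ by the reflexivity of $L^{(q',q)}(X\times X)$ recorded before Theorem \ref{russothmeven}. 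Russo's estimate applied to $k_j-k_j^{(n)}$ then gives $K_j^{(n)}\to K_j$ in $\ellL^q$, so by the Schatten--H\"older inequality the approximate products converge to $K_1\cdots K_m$ in $\ellL^{q/m}\subseteq \ellL^1$, and hence the traces converge. Passing to the limit in the identity already established for the smooth approximants yields the trace formula in general.

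The main obstacle is to justify the absolute convergence of the right-hand integral for general $k_j$ together with the interchange of limit and integration. The $(q',q)$-norm is asymmetric in its two arguments, so a single H\"older inequality does not suffice; instead I would iterate H\"older alternately in the variables $x_j$ and $x_{j+1}$, using $\|k_j\|_{q',q}$ when integrating with respect to the first variable and $\|k_j^*\|_{q',q}$ when integrating with respect to the second. This yields a uniform bound of the form $\prod_j (\|k_j^{(n)}\|_{q',q}\|(k_j^{(n)})^*\|_{q',q})^{1/2}$ for the multi-dimensional integral, explaining why both hypotheses are essential, and enables dominated convergence to move the limit inside while Fubini rearranges the iterated integral into the single integral over $X^m$ appearing in the statement.
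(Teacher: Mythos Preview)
The paper does not actually prove this statement: immediately before it the text reads ``For the discussion on how to remove the condition $k\in L^2(X\times X)$ and the proof of the next Theorem, we refer to \cite{mgoffodd}.'' So there is no in-paper argument to compare against; the proof lives in the companion odd-dimensional paper.

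Your outline is sound and is along standard lines. Two small comments. First, the density of smooth compactly supported kernels in $L^{(q',q)}$ is not a consequence of reflexivity; reflexivity tells you about duals, not about approximation. Density does hold for $1\leq q',q<\infty$ by the usual mollification and truncation arguments in mixed-norm spaces, so the step is fine once you cite the right fact. Second, your final paragraph gestures at an iterated H\"older bound of the form $\prod_j(\|k_j\|_{q',q}\|k_j^*\|_{q',q})^{1/2}$ for the cyclic integral, but you should check that the exponents really close up around the cycle; the asymmetry of the $(q',q)$-norm makes this a bit delicate, and it is where the hypothesis $m\geq q$ enters on the integral side. An alternative that sidesteps this is to split the product into two blocks of roughly $m/2$ factors; since $m\geq q$ each block lies in $\ellL^2$, hence is Hilbert--Schmidt with an $L^2$ kernel given by the iterated integral, and then the standard identity $\tra(AB)=\int k_A(x,y)k_B(y,x)\,\rd x\,\rd y$ for Hilbert--Schmidt $A,B$ gives the formula directly without any approximation.
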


\section{The virtual projection with Chern character being the volume form}

In order to obtain a formulation of the degree as an index, we start by constructing a vector bundle $E_Y$ over an arbitrary even dimensional oriented manifold $Y$ such that the only non-constant term in $\ch[E_Y]$ is of top degree. The idea is to use the Bott element in $K^0(\R^{2n})$, which defines a virtual rank zero bundle on a coordinate neighborhood in $Y$ and extend this to a virtual bundle on $Y$. 

Let us first recall the construction of the Bott element $\beta\in K^0(\R^{2n})$. The element $\beta$ is represented by the difference class $(\wedge^{ev}_\C\C^n,\wedge ^{odd}_\C \C^n, c)$ where $\wedge^{ev}_\C\C^n$ and $\wedge ^{odd}_\C \C^n$ are considered as trivial vector bundles on $\R^{2n}$ and $c:\R^{2n}\to \Hom (\wedge^{ev}_\C\C^n,\wedge ^{odd}_\C \C^n)$ is constructed by letting $c(x)\in \Hom(\wedge^{ev}_\C\C^n,\wedge ^{odd}_\C \C^n)$ be the operator defined from the complex spin representation and Clifford multiplication by the vector $x\in \R^{2n}$. Since $c(x)$ is invertible for $x\neq 0$, with inverse $c(x)^*/|x|^2$, this difference class is well defined. See more in Chapter $2.7$ of \cite{atiyah} or Part III of \cite{atiyahbottshapiro}. By Proposition $2.7.2$ of \cite{atiyah}, the element $\beta$ generates $K^0(\R^{2n})$. Since $K^0(\R^{2n})=\ker(K^0(S^{2n})\to K^0(\{\infty\})$, the inclusion $\R^{2n}\subseteq S^{2n}$ induces an injection $K^0(\R^{2n})\to K^0(S^{2n})$, and $K^0(S^{2n})$ is generated by the Bott class and the trivial line bundle. Furthermore, the Bott class does as an element of $K^0(S^{2n})$ satisfy that 
\[\ch_{S^{2n}}\beta=[\rd V_{S^{2n}}],\]  
where $[\rd V_{S^{2n}}]\in H^{2n}_{dR}(S^{2n})$ denotes the cohomology class associated with a choice of a normalized volume form. 
 
The problem with this construction of the Bott element is that it does not fit directly into our definition of the Chern character in cyclic cohomology. We will now construct a projection-valued function $p_0:\R^{2n}\to End(\wedge ^*_\C \C^n)=M_{2^n}(\C)$ of rank $2^{n-1}$ that extends to a projection-valued function $p_T$ on $S^{2n}$ such that $\beta=[p_T]-2^{n-1}[1]$ in $K^0(S^{2n})$. Let us identify the complex Clifford algebra $\C l(\R^{2n})$ with $\End(\wedge_\C^*\C^n)$ using the complex spin representation. Define $p_0$ as:
\[p_0(x):=\frac{1}{1+|x|^2}
\begin{pmatrix} 
|x|^2&c(x)\\
c(x)^*& 1\end{pmatrix}\in End(\wedge ^{odd}_\C \C^n\oplus \wedge^{ev}_\C\C^n).\]
While
\[p_0(x)-\begin{pmatrix} 1&0\\0&0\end{pmatrix}=\frac{1}{1+|x|^2}
\begin{pmatrix} 
-1& c(x)\\
c(x)^*& 1\end{pmatrix}=\mathcal{O}(|x|^{-1})\quad \mbox{as}\quad |x|\to \infty,\]
the function $p_0$ extends over infinity to a function $p_T\in C^1(S^{2n},M_{2^n}(\C))$. Let $E_0\to \R^{2n}$ denote the vector bundle associated with $p_0$ using Serre-Swans theorem. One has that 
\[E_0=\{(x,v_1,v_2)\in \R^{2n}\times (\wedge ^{odd}_\C \C^n\oplus \wedge^{ev}_\C\C^n): v_1=c(x)v_2\}.\]
The vector bundle $E_0$ is trivializable via the isomorphism 
\begin{equation}
\label{fiso}
\id\oplus c: \R^{2n}\times \wedge ^{ev}_\C \C^n\to E_0, \quad (x,v)\mapsto (x,c(x)v,v).
\end{equation}
We define the morphism of vector bundles 
\begin{equation}
\label{fhom}
c_0:E_0\to \R^{2n}\times \wedge ^{odd}_\C \C^n, \quad (x,v_1,v_2)\mapsto (x,v_1).
\end{equation}
The morphism $c_0$ is an isomorphism outside the origin, with inverse 
\[(x,v_1)\mapsto (x,v_1,|x|^{-2} c(x)^*v_1).\]

\begin{prop}
\label{chernflatpt}
Under the isomorphism $K^0(S^{2n})\cong K_0(C(S^{2n}))$ the Bott element $\beta$ is mapped to $[p_T]-2^{n-1}[1]$, and therefore $\int_{S^{2n}}\ch_{S^{2n}}[p_T]=1$.
\end{prop}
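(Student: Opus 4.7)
The plan is to identify the bundle associated with $p_T$ as the Bott bundle built by clutching, and then deduce the integral from the stated fact that $\ch_{S^{2n}}\beta=[\rd V_{S^{2n}}]$. First I would note that the estimate displayed just before (\ref{fiso}) shows that $p_0(x)$ converges to $\begin{pmatrix}1&0\\0&0\end{pmatrix}$ as $|x|\to\infty$, so $p_T\in C^1(S^{2n},M_{2^n}(\C))$ really is a projection on $S^{2n}$ and defines $[p_T]\in K_0(C(S^{2n}))$. Since $p_T(\infty)$ has complex rank $2^{n-1}$, the class $[p_T]-2^{n-1}[1]$ restricts to zero at $\infty$ and so lies in $K^0(\R^{2n})\subseteq K^0(S^{2n})$.

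Next I would unwind the explicit trivialisations. On $\R^{2n}$, the bundle $E_T|_{\R^{2n}}=E_0$ is globally trivialised by (\ref{fiso}) via $(x,v)\mapsto (x,c(x)v,v)$. On a neighbourhood $U$ of $\infty$ contained in $\{|x|>1\}\cup\{\infty\}$, the morphism $c_0$ of (\ref{fhom}) is an isomorphism $E_T|_U\to U\times\wedge^{odd}_\C\C^n$, since $c(x)$ is invertible for $x\neq 0$ and the identification extends continuously across $\infty$, where $E_T|_\infty=\wedge^{odd}_\C\C^n$. Composing these two trivialisations on the overlap $U\cap\R^{2n}$ produces the transition $(x,v)\mapsto (x,c(x)v)$, which is exactly the Clifford multiplication defining $\beta$. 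Therefore $E_T$ is the Bott bundle, and
\[[p_T]-2^{n-1}[1]=\beta\quad\text{in}\quad K^0(S^{2n}).\]

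For the integral, applying $\ch_{S^{2n}}$ to both sides and using that $\ch_{S^{2n}}[1]$ is concentrated in degree zero while $\int_{S^{2n}}[\rd V_{S^{2n}}]=1$ by the chosen normalisation, we obtain
\[\int_{S^{2n}}\ch_{S^{2n}}[p_T]=\int_{S^{2n}}\ch_{S^{2n}}\beta=\int_{S^{2n}}[\rd V_{S^{2n}}]=1.\]

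The step I expect to require the most care is the clutching identification over a spherical shell: the transition $v\mapsto c(x)v$ above is Clifford multiplication by $x$ itself rather than by the unit vector $x/|x|$ appearing in the standard Bott construction on $S^{2n-1}$. Because $c(x)^*c(x)=|x|^2\id$ gives $c(x)=|x|\,c(x/|x|)$, and multiplication by the positive scalar $|x|$ can be deformed to $1$ without losing invertibility, the two clutchings are homotopic through invertible transitions on any spherical shell, hence define the same class in $K^0(S^{2n})$ and complete the identification with $\beta$.
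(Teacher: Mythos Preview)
Your proof is correct and follows essentially the same approach as the paper: both use the trivialisation $\id\oplus c$ of (\ref{fiso}) and the morphism $c_0$ of (\ref{fhom}) to identify $[p_T]-2^{n-1}[1]$ with $\beta$, and then read off the integral from $\ch_{S^{2n}}\beta=[\rd V_{S^{2n}}]$. The only difference is packaging: the paper stays in the difference-class model of $K^0(\R^{2n})$, observing that $(E_0,\wedge^{odd}_\C\C^n,c_0)\cong(\wedge^{ev}_\C\C^n,\wedge^{odd}_\C\C^n,c)$ via $\id\oplus c$, whereas you translate the same data into a clutching description on $S^{2n}$; in particular your final paragraph about $c(x)$ versus $c(x/|x|)$ is unnecessary here, since the paper's $\beta$ is already defined using Clifford multiplication by $x$ itself.
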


\begin{proof}
The formal difference class $[p_T]-2^{n-1}[1]\in K_0(C^1(S^{2n}))$ is of virtual rank $0$, so it is in the image of the injection $K^0(\R^{2n})\to K_0(C(S^{2n}))$. The element $[p_T]-2^{n-1}[1]$ clearly comes from the formal difference $[E_0]-2^{n-1}[1]$ which in turn is defined as the difference class $(E_0,\wedge^{odd}_\C \C^n, c_0)\in K^0(\R^{2n})$, where $c_0$ is the bundle morphism of equation \eqref{fhom}. The latter is isomorphic to the Bott class via the isomorphism $\id\oplus c$ defined in equation \eqref{fiso}. It follows that $\ch_{S^{2n}}[p_T]=2^{n-1}+\ch_{S^{2n}}\beta=2^{n-1}+[\rd V_{S^{2n}}]$.
\end{proof}

In the general case, let $Y$ be a compact connected oriented manifold of dimension $2n$. If we take an open subset $U$ of $Y$ with coordinates $(x_{i})_{i=1}^{2n}$ such that 
\[U=\{x:\sum_{i=1}^{2n}|x_{i}(x)|^2<1\}.\]
The coordinates define a diffeomorphism $\nu:U\cong B_{2n}$. Let us also choose a diffeomorphism $\tau:B_{2n}\cong \R^{2n}$.  We can define the projection-valued function $p_Y:Y\to M_{2^{n}}(\C)$ by 
\begin{equation}
\label{localsymboleven}
p_Y(x):=\begin{cases}
p_T(\tau\nu(x))\quad\mbox{for} \quad x\in U\\
p_T(\infty)\quad\mbox{for} \quad x\notin U
\end{cases}
\end{equation}
If we let $\tilde{\nu}:Y\to S^{2n}$ be the Lipschitz continuous function defined by 
\begin{equation}
\label{nuexteven}
\tilde{\nu}(x)=
\begin{cases}
\tau(\nu(x)) \quad\mbox{for} \quad x\in U\\
\infty\quad\mbox{for} \quad x\notin U
\end{cases}
\end{equation}
the Lipschitz function $p_Y$ can be expressed as $p_Y=\tilde{\nu}^*p_T$. Observe that the projection $p_Y$ defines the same class in $K$-theory as the projection $\tilde{p}_Y:=\tilde{\nu}^*p_{T,1}$, where $p_{T,t}(x):=p_T(\e^{t|x|^2}x)$ for $t\geq 0$. For $t>0$ the projection $p_{T,t}$ stabilizes to infinite order at infinity, and therefore $\tilde{p}_Y$ is smooth. We will choose a normalized volume form on $Y$ and let $[\rd V_Y]\in H^{2n}_{dR}(Y)$ denote the associated de Rham cohomology class.

\begin{sats}
\label{degsatseven}
If $Y$ is a compact connected oriented manifold of even dimension and $\rd V_Y$ denotes the normalized volume form on $Y$, the projection $p_Y$ satisfies  
\[\ch[p_Y]=2^{n-1}+[\rd V_Y],\]
in $H^{even}_{dR}(Y)$. Thus, if $f:X\to Y$ is a smooth mapping 
\[\deg(f)=\int_X f^*\ch[p_Y]\]
\end{sats}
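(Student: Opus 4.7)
My plan is to reduce the theorem to Proposition \ref{chernflatpt}, which computes $\ch[p_T]$ on $S^{2n}$, by pulling back along the map $\tilde{\nu}: Y \to S^{2n}$. Since $p_Y = \tilde{\nu}^* p_T$, naturality of the Chern character should yield
\[\ch[p_Y] = \tilde{\nu}^* \ch[p_T] = 2^{n-1} + \tilde{\nu}^*[\rd V_{S^{2n}}],\]
after which it remains to identify $\tilde{\nu}^*[\rd V_{S^{2n}}]$ with $[\rd V_Y]$ in $H^{2n}_{dR}(Y)$.

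The main obstacle is that $\tilde{\nu}$ is only Lipschitz near $\partial U$, so de Rham naturality does not apply literally to $p_Y$. To handle this I would work with the smooth $K$-theoretic representative $\tilde{p}_Y = \tilde{\nu}^* p_{T,1}$ introduced just before the theorem; its smoothness is guaranteed because $p_{T,1}$ is flat to infinite order at $\infty$, which absorbs the non-smoothness of $\tilde{\nu}$ at $\partial U$. On $U$, the chain rule gives the pointwise identity $\ch[\tilde{p}_Y] = \tilde{\nu}^* \ch[p_{T,1}]$, and on $Y \setminus U$ the projection $\tilde{p}_Y$ is locally constant, so only the rank term $2^{n-1}$ survives there. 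The straight-line homotopy $p_{T,t}$, $t \in [0,1]$, identifies $[p_{T,1}] = [p_T]$ in $K^0(S^{2n})$, so that Proposition \ref{chernflatpt} and the naturality identity above together determine $\ch[\tilde{p}_Y] = \ch[p_Y]$ up to the identification of the top-degree term.

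For that last identification, $\tilde{\nu}$ is a degree-one map: restricted to $U$ it is the orientation-preserving diffeomorphism $\tau \circ \nu$ onto $S^{2n} \setminus \{\infty\}$, and $Y \setminus U$ collapses to a single point. Hence
\[\int_Y \tilde{\nu}^* \rd V_{S^{2n}} = \int_{S^{2n}} \rd V_{S^{2n}} = 1 = \int_Y \rd V_Y,\]
and since $H^{2n}_{dR}(Y) \cong \R$ via integration, the two top-forms represent the same cohomology class. (If the coordinates on $U$ are not orientation-preserving, one absorbs a sign by relabeling.)

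Finally the degree formula is immediate: pulling back $\ch[p_Y] = 2^{n-1} + [\rd V_Y]$ along a smooth $f : X \to Y$ and integrating over the $2n$-dimensional $X$ gives
\[\int_X f^* \ch[p_Y] = \int_X f^*[\rd V_Y] = \deg(f),\]
since the constant $0$-form $2^{n-1}$ contributes nothing to a top-dimensional integral and $\rd V_Y$ is normalized, so that the right-hand side is the standard de Rham expression for the mapping degree.
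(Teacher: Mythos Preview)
Your proposal is correct and follows essentially the same route as the paper: pass to the smooth representative $\tilde{p}_Y=\tilde{\nu}^*p_{T,1}$, invoke Proposition~\ref{chernflatpt} on $S^{2n}$, and identify the top-degree term by the change-of-variables/degree-one computation $\int_Y \tilde{\nu}^*\rd V_{S^{2n}}=\int_{S^{2n}}\rd V_{S^{2n}}=1$. Your phrasing via naturality of the Chern character makes the vanishing of the intermediate-degree components transparent (they pull back from $H^{2j}(S^{2n})=0$ for $0<j<n$), which is exactly what the paper's remark about exactness on $U$ is encoding.
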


\begin{proof}
By Proposition \ref{chernflatpt} we have the identities
\[\int _Y\ch[\tilde{p}_Y]= \int _{U}\ch_{n}[\tilde{p}_Y]= \int _{U}\tilde{\nu}^*\ch_{n}[p_T]=\int _{S^{2n}}\ch_{n}[p_T]=1.\]
Therefore we have the identity $\ch_{n}[\tilde{p}_Y]=[\rd V_Y]$. Since $\ch[p_Y]=\ch[\tilde{p}_Y]$ and $\ch[\tilde{p}_Y]=2^{n-1}+\ch_{n}[\tilde{p}_Y]$ up to an exact form on $U$ and vanishes to first order at $\partial U$ the Theorem follows.
\end{proof}

Later on we will also need the Chern character of $p_Y$ in cyclic homology as is defined in \eqref{cycchern}. First we need a lemma. We will use the notation $\langle\cdot,\cdot\rangle$ for the scalar product in $\R^{2n}$. For an orthogonal basis $e_1,e_2,\ldots , e_{2n}$ of $\R^{2n}$ the Clifford algebra $\C l(\R^{2n})$ has a basis consisting of multiples $e_{j_1}\cdots e_{j_l}$ for $1\leq j_1<\ldots <j_l\leq 2n$. Any element $u$ in the complex tensor algebra of $\R^{2n}$ does by the universal property of the Clifford algebras define an element $\tilde{u}\in \C l(\R^{2n})$. For a tensor $u$ we let $[u]_{2n}$ be the number such that the projection of $\tilde{u}$ onto $e_1e_2\cdots e_{2n}$ is $[u]_{2n}e_1e_2\cdots e_{2n}$. If $u=(u_1,\ldots, u_k)\in (\R^{2n})^{\times k}$ and $1\leq j_1,\ldots ,j_l\leq k$ we will also use the notation $[u|j_1,\ldots j_l]_{2n}$ for $[u_0]_{2n}$ where $u_0\in (\R^{2n})^{\otimes k-l}$ is defined as the tensor product of all the $u_j$:s except for $j\in\{j_p\}_{p=1}^{l}$. The complex spin representation of $\C l(\R^{2n})$ is graded and the super trace in this representation vanish on all basis elements but $e_1e_2\cdots e_{2n}$. This fact and a lengthier calculation implies that for any element $v\in \C l(\R^{2n})$ it holds that
\[\tra_{\wedge^{ev}_\C\C^n}(v)-\tra_{\wedge^{odd}_\C\C^n}(v)=(-2i)^n[v]_{2n}.\]

For the natural number $l>0$ we define $\Gamma^l_m\subseteq\{1,2,\ldots, 2m\}^{l}$ as the set of all sequence $\mathbbm{h}=(h_j)_{j=1}^{2l}$ such that $h_j\neq p$ for any $p\leq j$ and $h_j\neq h_p$ for any $j\neq p$. We define $\epsilon_l:\Gamma^j_m\to \{\pm 1\}$ by 
\[\epsilon_l(\mathbbm{h}):=(-1)^{l+\sum_{j=1}^l h_j}.\]

\begin{lem}
\label{tracecalc}
For $x=(x_1,x_2,\ldots x_{2m})\in (\R^{2n})^{\times 2m}$ we have that 
\begin{align*}
&\tra_{\wedge^{ev}_\C\C^n}\left(\prod_{l=1}^{m} c(x_{2l-1})^*c(x_{2l})\right)=(-2)^{n-1}i^n[x_1\otimes x_2\otimes \cdots \otimes x_{2m}]_{2n}+\\
&+(-2)^{n-1}i^n\sum_{l=1}^{m-1}\sum_{\mathbbm{h}\in\Gamma^l_m} \epsilon_l(\mathbbm{h})\left[x|1,h_1,2,h_2,\ldots,l,h_l \right]_{2n}\prod_{p=1}^l\langle x_p,x_{h_{p}}\rangle+\\
&+2^{n-1}\sum_{\mathbbm{h}\in\Gamma^m_m} \epsilon_l(\mathbbm{h})\prod_{p=1}^m\langle x_{p},x_{h_p}\rangle.
\end{align*}
\end{lem}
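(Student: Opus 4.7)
The key reduction is that on the even spinors $\wedge^{ev}_\C \C^n$, the operator $\prod_{l=1}^m c(x_{2l-1})^* c(x_{2l})$ equals the action of the Clifford product $X := x_1 x_2 \cdots x_{2m} \in \C l(\R^{2n})$ via the complex spin representation. Indeed, Clifford multiplication by $x \in \R^{2n}$ on the full spin representation $\wedge^*_\C \C^n$ is an odd operator whose restrictions $\wedge^{ev}_\C \C^n \to \wedge^{odd}_\C \C^n$ and $\wedge^{odd}_\C \C^n \to \wedge^{ev}_\C \C^n$ are $c(x)$ and $c(x)^*$ respectively; composing $2m$ such operators and restricting back to $\wedge^{ev}_\C \C^n$ gives exactly the product appearing in the lemma.

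With this identification the natural strategy is to split the trace into its super-trace and ordinary-trace parts,
\[
\tra_{\wedge^{ev}_\C\C^n}(X) = \tfrac{1}{2}\bigl(\tra_{\wedge^{ev}_\C\C^n}(X) + \tra_{\wedge^{odd}_\C\C^n}(X)\bigr) + \tfrac{1}{2}\bigl(\tra_{\wedge^{ev}_\C\C^n}(X) - \tra_{\wedge^{odd}_\C\C^n}(X)\bigr),
\]
and compute each piece from the Clifford-algebra expansion of $X$. The super-trace part equals $\tfrac{1}{2}(-2i)^n[X]_{2n}$ by the identity recalled immediately before the lemma, so it is entirely controlled by the $e_1 \cdots e_{2n}$ coefficient of $X$. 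The ordinary-trace part is $\tfrac{1}{2} \cdot 2^n$ times the scalar coefficient of $X$, since in the complex spin representation every Clifford basis element of positive degree is traceless and $\id$ has trace $2^n$. Thus the problem reduces to reading off two specific coefficients of the Clifford product $X$.

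Both coefficients can be extracted from a Wick-type expansion of $X$, obtained by iterating the Clifford anticommutation relation $x_i x_j + x_j x_i = 2\langle x_i, x_j\rangle$. Concretely, the recurrence
\[
x_1 \cdot (x_2 \cdots x_{2m}) = \sum_{h=2}^{2m} (-1)^h \, 2\langle x_1, x_h\rangle \cdot x_2 \cdots \widehat{x}_h \cdots x_{2m} - (x_2 \cdots x_{2m}) \cdot x_1
\]
applied inductively expresses $X$ as a sum indexed by ordered partial matchings $\mathbbm{h} \in \Gamma^l_m$, $l = 0, 1, \ldots, m$: each $\mathbbm{h}$ contracts the pairs $(p, h_p)$ to scalars $\prod_{p=1}^l \langle x_p, x_{h_p}\rangle$ and leaves the Clifford product of the $2m - 2l$ unmatched factors. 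Extracting the scalar coefficient of $X$ selects exactly the full matchings ($l = m$) and produces the $2^{n-1} \sum_{\Gamma^m_m}$ term of the lemma, while extracting $[X]_{2n}$ selects the partial matchings for $0 \leq l \leq m-1$, in which the top-degree component of the leftover Clifford product produces the $[x | 1, h_1, \ldots, l, h_l]_{2n}$ factors and the $l=0$ case produces the leading term $(-2)^{n-1} i^n [x_1 \otimes \cdots \otimes x_{2m}]_{2n}$.

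The main obstacle is the sign bookkeeping: the signs produced by the recurrence, combined with the Koszul signs required to bring the unmatched factors into their natural index order inside the Clifford algebra, must reproduce $\epsilon_l(\mathbbm{h}) = (-1)^{l + \sum h_j}$. This is a routine induction on $l$ driven by the alternating factor $(-1)^h$ in the recurrence above.
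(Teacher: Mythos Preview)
Your overall strategy --- decompose $\tra_{\wedge^{ev}}$ as $\tfrac12(\tra_{\wedge^{ev}}+\tra_{\wedge^{odd}})+\tfrac12(\tra_{\wedge^{ev}}-\tra_{\wedge^{odd}})$ and use Clifford anticommutation --- is exactly what the paper does (the ``shifted'' trace in the paper's proof is $\tra_{\wedge^{odd}}(X)$ in disguise, via cyclicity). The gap is in how you handle the recurrence. Your identity
\[
x_1(x_2\cdots x_{2m})=\sum_{h=2}^{2m}(-1)^h\,2\langle x_1,x_h\rangle\,x_2\cdots\widehat{x}_h\cdots x_{2m}\;-\;(x_2\cdots x_{2m})x_1
\]
does \emph{not} ``express $X$ as a sum over partial matchings'' in $\C l(\R^{2n})$: the tail term $-(x_2\cdots x_{2m})x_1$ never goes away. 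If you take $[\,\cdot\,]_{2n}$ of both sides you get $[X]_{2n}+[Yx_1]_{2n}$ on the left; since $x_1$ and $Y$ are odd, $\suptra(x_1Y)=-\suptra(Yx_1)$ gives $[Yx_1]_{2n}=-[X]_{2n}$, so the left side is $0$, not $[X]_{2n}$. Thus the recurrence yields $0=\sum_h(-1)^h\langle x_1,x_h\rangle[\hat X_h]_{2n}$ rather than a Wick expansion of $[X]_{2n}$, and your explanation of the middle terms $(-2)^{n-1}i^n\sum_{l\geq 1}\cdots[x|\cdots]_{2n}\prod\langle\cdot,\cdot\rangle$ as ``partial matchings selected by $[X]_{2n}$'' is not correct.

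What the paper actually does is apply the recurrence at the level of $\tra_{\wedge^{ev}}$, not of $X$. There the tail $\tra_{\wedge^{ev}}(Yx_1)$ equals $\tra_{\wedge^{odd}}(x_1Y)$ by cyclicity, so the average $\tfrac12(\tra_{\wedge^{ev}}+\tra_{\wedge^{odd}})(X)$ reduces by anticommutation, while the difference contributes the supertrace term $(-2)^{n-1}i^n[X]_{2n}$. This yields a recursion
\[
\tra_{\wedge^{ev}}(x_1\cdots x_{2m})=(-2)^{n-1}i^n[x_1\cdots x_{2m}]_{2n}+\sum_{j}(-1)^j\langle x_1,x_j\rangle\,\tra_{\wedge^{ev}}(\widehat{\cdots}),
\]
and iterating it peels off a fresh $[\,\cdot\,]_{2n}$ term at \emph{each} level --- that is where the middle terms come from. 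Your two-term formula $\tra_{\wedge^{ev}}(X)=2^{n-1}[X]_0+\tfrac12(-2i)^n[X]_{2n}$ is correct and is in fact equivalent to the lemma (the identity $\sum_h(-1)^h\langle x_1,x_h\rangle[\hat X_h]_{2n}=0$ you inadvertently derived shows the middle terms cancel), but as written your argument does not establish the stated formula and misattributes the origin of those terms.
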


\begin{proof}
Let us calculate these traces using the relations in the Clifford algebra:
\begin{align*}
\tra_{\wedge^{ev}_\C\C^n}&\left(\prod_{l=1}^{m} c(x_{2l-1})^*c(x_{2l})\right)=\frac{1}{2}\tra_{\wedge^{ev}_\C\C^n}\left(\prod_{l=1}^{m} c(x_{2l-1})^*c(x_{2l})\right)+\\
&+\frac{1}{2}\tra_{\wedge^{ev}_\C\C^n}\left(\left(\prod_{l=1}^{m-1} c(x_{2l})^*c(x_{2l+1})\right)c(x_{2m})^*c(x_1)\right)+\\
&+(-2)^{n-1}i^n[x_1\otimes x_2\otimes \cdots \otimes x_{2m}]_{2n}=\\
=\sum_{j=2}^{2m}(-1)^{j}&\langle x_{1},x_j\rangle \tra_{\wedge_\C^{ev}\C^n}\left(\widehat{c(x_{1})^*c(x_{j})}\right)+(-2)^{n-1}i^n[x_1\otimes x_2\otimes \cdots \otimes x_{2m}]_{2n},
\end{align*}
where $\widehat{c(x_{1})^*c(x_{j})}$ denotes $\prod_{j=1}^{m-1} c(x_{l_{2j-1}})^*c(x_{l_{2j}})$ where $(l_j)_{j=1}^{2m-2}$ is the sequence $1,2,\ldots, 2m$ with the occurences of $1$ and $j$ removed. The sign $(-1)^j$ comes from the numeral of anti-commutations needed to anti-commute the first operator with the $j$:th. Continuing in this fashion one arrives at the conclusion of the Lemma.
\end{proof}

\begin{lem} 
\label{tracalc}
The Chern character of $p_Y$ is given by $\tilde{\nu}^*\ch[p_T]$  and the Chern character of $p_T$ in cyclic homology can be represented by a cyclic $2k$-cycle that in the coordinates on $\R^{2n}\subseteq S^{2n}$ is given by the formula
\begin{align*}
\ch[p_T]&(x_0,x_1,\ldots,x_{2k})=\frac{1}{k!}\tra_{\wedge^*_\C\C^n}\left(\prod_{l=0}^{2k}p_{0}(x_{l})\right)=\\
&=\frac{1}{k!\prod_{l=0}^{2k}(1+|x_l|^2)}\sum_{m=0}^{2k+1}\sum_{0\leq g_1< \cdots < g_m\leq 2k}\tra_{\wedge^{ev}_\C\C^n}\left(\prod_{l=0}^m c(x_{g_l})^*c(x_{g_l+1})\right),
\end{align*}
where we identify $x_{2k+1}=x_0$. 
\end{lem}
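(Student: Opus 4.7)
The first claim $\ch[p_Y]=\tilde{\nu}^*\ch[p_T]$ is pure functoriality: the Lipschitz pullback $\tilde{\nu}^*$ is an algebra homomorphism sending $p_T$ to $p_Y$, and the Chern character formula $\ch_{2k}[p]=(k!)^{-1}\tra(p\otimes\cdots\otimes p)$ from \eqref{cycchern} commutes with algebra homomorphisms applied componentwise on the tensor product. The first equality in the explicit display is then just the definition of $\ch[p_T]$ evaluated on $(x_0,\ldots,x_{2k})\in(\R^{2n})^{2k+1}$, combined with the agreement $p_T=p_0$ on $\R^{2n}\subseteq S^{2n}$.

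The main computational ingredient for the second equality is to write $p_0$ in the factored form
\[ p_0(x)=\frac{1}{1+|x|^2}\,v(x)v(x)^*,\qquad v(x):=\begin{pmatrix} c(x)\\ \id\end{pmatrix}\colon\wedge^{ev}_\C\C^n\to\wedge^{odd}_\C\C^n\oplus\wedge^{ev}_\C\C^n.\]
A direct block calculation together with the Clifford identity $c(x)^*c(x)=|x|^2\id$ yields $v(x)^*v(x)=(1+|x|^2)\id_{\wedge^{ev}_\C\C^n}$, reconfirming that $p_0$ is a projection of rank $2^{n-1}$, as well as the key bilinear identity $v(x)^*v(y)=\id+c(x)^*c(y)$ for all $x,y\in\R^{2n}$.

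Plugging this factorization into $\prod_{l=0}^{2k}p_0(x_l)$, pulling out the scalar prefactor, and using cyclicity of the operator trace to rotate the outermost $v(x_0)$ past $v(x_{2k})^*$ collapses the expression into
\[ \tra_{\wedge^*_\C\C^n}\!\Big(\prod_{l=0}^{2k}p_0(x_l)\Big)=\frac{1}{\prod_l(1+|x_l|^2)}\,\tra_{\wedge^{ev}_\C\C^n}\!\Big(\prod_{l=0}^{2k}\big(\id+c(x_l)^*c(x_{l+1})\big)\Big),\]
with the cyclic convention $x_{2k+1}=x_0$. Expanding the right-hand product by distributivity produces one summand per subset $\{g_1<\cdots<g_m\}\subseteq\{0,1,\ldots,2k\}$, namely $\tra_{\wedge^{ev}_\C\C^n}\bigl(\prod_{j=1}^{m}c(x_{g_j})^*c(x_{g_j+1})\bigr)$ with factors taken in increasing order of $j$; dividing by $k!$ and summing over $m$ and all such subsets reproduces the formula of the lemma.

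The only delicate point I foresee is the cyclic identification $x_{2k+1}=x_0$, which must be invoked uniformly with the ordinary cases when $g_m=2k$ occurs; beyond that bookkeeping the entire argument reduces to the identity $v^*v=\id+c^*c$ combined with cyclicity of the trace, so no substantive obstacle is anticipated.
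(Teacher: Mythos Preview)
Your proof is correct and follows essentially the same route as the paper. The paper introduces the normalized vector $V(x)=(1+|x|^2)^{-1/2}v(x)$ so that $p_0(x)=V(x)V(x)^*$, then uses cyclicity of the trace and the identity $V(x)^*V(y)=(1+|x|^2)^{-1/2}(1+|y|^2)^{-1/2}\bigl(c(x)^*c(y)+1\bigr)$ to reach the same expansion; your choice to keep $v(x)$ unnormalized and pull the scalar factors out explicitly is a cosmetic variation of the same argument.
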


\begin{proof}
Define the function $V:\R^{2n}\to \Hom(\wedge^{ev}_\C\C^n,\wedge^{odd}_\C \C^n\oplus \wedge^{ev}_\C \C^n)$ by 
\[V(x)v:=\frac{c(x)v\oplus v}{\sqrt{|x|^2+1}}\in \wedge^{odd}_\C \C^n\oplus \wedge^{ev}_\C \C^n, \quad v\in \wedge^{ev}_\C\C^n.\]
The vector $V$ is defined so that $p_0(x)=V(x)V(x)^*$. Furthermore, observe that $V(x)^*V(y)=c(x)
^*c(y)+1\in \End(\wedge^{ev}_\C\C^n)$. Therefore
\begin{align*}
&\frac{1}{k!}\tra_{\wedge^*_\C\C^n}\left(\prod_{l=0}^{2k}p_{0}(x_{l})\right)=\frac{1}{k!}\tra_{\wedge^{ev}_\C\C^n}\left(V(x_{2k})^*V(x_0)\prod_{l=0}^{2k-1} V(x_j)^*V(x_{j+1})\right)=\\
&=\frac{1}{k!\prod_{l=0}^{2k}(1+|x_l|^2)}\tra_{\wedge^{ev}_\C\C^n}\left(\left(c(x_{2k})^*c(x_0)+1\right)\prod_{l=0}^{2k-1} \left(c(x_j)^*c(x_{j+1})+1\right)\right)=\\
&=\frac{1}{k!\prod_{l=0}^{2k}(1+|x_l|^2)}\sum_{m=0}^{2k+1}\sum_{0\leq g_1\leq \cdots \leq g_m\leq 2k}\tra_{\wedge^{ev}_\C\C^n}\left(\prod_{l=0}^m c(x_{g_l})^*c(x_{g_l+1})\right).
\end{align*}
\end{proof}

\section{Index theory for pseudo-differential operators}

The index of an elliptic pseudo-differential operator can be expressed in terms of local formulas depending on the symbol via the Atiyah-Singer index theorem. In this section we will use elliptic pseudo-differential operators and the Atiyah-Singer index theorem to give an index formula for the degree of a continuous mapping. The theory of pseudo-differential operators can be found in \cite{hormtre}. For an introduction to the Atiyah-Singer index theorem we refer the reader to the survey article \cite{atiyahsinger}. \\

The Atiyah-Singer index theorem, see Theorem $1$ of \cite{atiyahsinger}, states that the index of an elliptic pseudo-differential operator $A$ on the compact manifold $X$ without boundary can be calculated using de Rham cohomology as:
\[\ind (A)=\int_{T^*X} \ch[A]\wedge \pi^*Td(X),\]
where $\ch[A]$ is the Chern character of $A$ and $Td(X)$ is the Todd class of the complexified tangent bundle of $X$ and $\pi:T^*X\to X$ denotes the projection mapping. Since the Chern character is a ring homomorphism we have the following lemma:

\begin{lem}
\label{factoras}
For a smooth projection valued function $p:X\to \Ko$ and an elliptic pseudo-differential operator $A$, the Chern character of the pseudo-differential operator $A_p:=p(A\otimes id)p$ is given by $\ch[A_p]=\ch[A]\wedge\pi^*\ch[p]$.
\end{lem}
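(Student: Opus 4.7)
The plan is to interpret the statement as multiplicativity of the Chern character under the $K^0(X)$-module structure on $K$-theory with compact supports on $T^*X$. Concretely, I would trace the following chain: the Chern character $\ch[A]$ of an elliptic pseudo-differential operator is by definition $\ch$ applied to the $K$-theory symbol class $[\sigma(A)] \in K^0(T^*X)$ (the class of the principal symbol, which is invertible off a compact set). Therefore, to prove the lemma it suffices to identify the symbol class $[\sigma(A_p)]$ inside $K^0(T^*X)$ as the product $[\sigma(A)] \cdot \pi^*[p]$, and then invoke the ring-homomorphism property of $\ch$.

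First I would compute the principal symbol of $A_p = p(A\otimes\id_\Ko)p$. Since $p$ is a smooth function on $X$ (viewed as a $0$-th order endomorphism symbol on $T^*X$ via the projection $\pi$), the principal symbol satisfies
\[\sigma(A_p)(x,\xi) = \pi^*p(x,\xi)\,(\sigma(A)(x,\xi)\otimes \id)\,\pi^*p(x,\xi).\]
Restricted to the range bundle $\pi^*E \subseteq \pi^*(X\times \Ko)$, where $E := pH$, this is an isomorphism outside a compact set (by ellipticity of $A$), and as a difference bundle representing an element of $K^0(T^*X)$ it factors through the tensor product with $\pi^*E$. Hence
\[[\sigma(A_p)] = [\sigma(A)] \cdot \pi^*[E] \in K^0(T^*X),\]
where the product is the natural $K^0(X)$-module structure on $K^0(T^*X)$ obtained from pullback along $\pi$.

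Next, the Chern character $\ch : K^0(-)\to H^{even}_{dR}(-;\Q)$ is a natural ring homomorphism compatible with pullback. Applied to the product above and using $\ch[E] = \ch[p]$,
\[\ch[A_p] = \ch[\sigma(A_p)] = \ch[\sigma(A)] \wedge \pi^*\ch[E] = \ch[A] \wedge \pi^*\ch[p],\]
which is the assertion.

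The main obstacle, such as it is, is keeping the bookkeeping around the infinite-dimensional factor $\Ko$ honest: since $p$ is compact-projection-valued, $E = pH$ is a genuine finite-rank vector bundle at each point, so the tensor product $\sigma(A)\otimes\id$ followed by cutting down with $p$ yields a legitimate finite-rank difference class on $T^*X$, and the standard multiplicativity of $\ch$ on finite-dimensional difference bundles applies without modification. Everything else is naturality of $\ch$ and the tautological description of $\ch[A]$ as $\ch$ of the symbol class.
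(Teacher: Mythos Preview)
Your argument is correct and follows precisely the approach the paper indicates: the paper states the lemma immediately after the sentence ``Since the Chern character is a ring homomorphism we have the following lemma,'' and gives no further proof. Your proposal simply spells out what this means---identifying $[\sigma(A_p)]$ with $[\sigma(A)]\cdot\pi^*[p]$ in $K^0(T^*X)$ and then applying the ring-homomorphism property of $\ch$---which is exactly the intended justification.
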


Later on, in Theorem \ref{holderindexeven}, the pseudo-differential operator will play a different role compared to the role in the Atiyah-Singer theorem. In the Atiyah-Singer theorem the elliptic pseudo-differential operator defines an element in $K$-theory which pairs with the $K$-homology class whose Connes-Chern character is the Todd class under the isomorphism \eqref{cycde} and gives an index. We will use the elliptic pseudo-differential operator in the dual way as a $K$-homology class that we pair with projections over $C^\infty(X)$ in terms of an index. 

The heuristic explanation of this method is that $K^0(X)$ is a ring and $K^*(T^*X)$ is a $K^*(X)$-module via the projection mapping $\pi:T^*X\to X$. If $A$ is an elliptic pseudo-differential operator, $A$ defines both a $K$-homology class on $X$ and the symbol of $A$ defines an element $[A]\in K^0(T^*X)$. Furthermore, $\ind\!_A(x)=\ind(x\cdot[A])$ for $x\in K^0(X)$, where $\ind:K^0(T^*X)\to \Z$ is the index mapping of Atiyah-Singer. On the level of de Rham cohomology this is exactly the content of Lemma \ref{factoras}.

\begin{sats}
\label{smoothindex}
If $f:X\to Y$ is a smooth mapping between even-dimensional oriented manifolds, $p_Y$ is as in \eqref{localsymboleven} and $A$ is an elliptic pseudo-differential operator on $X$ we have the following degree formula:
\[\ch_0[A]\deg(f)=\ind(A_{p_Y\circ f})-2^{n-1}\ind(A),\]
where $\ch_0[A]$ denotes the constant term in $\pi_*\ch[A]$. If $A$ is of order $0$, the same statement holds for continuous $f$.
\end{sats}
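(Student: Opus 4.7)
The plan is to combine the Atiyah-Singer index theorem with Lemma~\ref{factoras} and Theorem~\ref{degsatseven}. For smooth $f$, I start from
\[\ind(A_{p_Y\circ f}) = \int_{T^*X}\ch[A_{p_Y\circ f}]\wedge \pi^* Td(X),\]
and factor the integrand as $\ch[A]\wedge \pi^*\ch[p_Y\circ f]\wedge \pi^* Td(X)$ using Lemma~\ref{factoras}. By naturality of the Chern character, $\ch[p_Y\circ f] = f^*\ch[p_Y]$, and Theorem~\ref{degsatseven} identifies the latter with $2^{n-1} + f^*[\rd V_Y]$ in $H^{even}_{dR}(X)$.

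Substituting, the integral splits into two pieces. The constant $2^{n-1}$ piece immediately gives $2^{n-1}\ind(A)$ by the Atiyah-Singer formula applied to $A$ alone. For the remaining term I would apply the projection formula to push the fiber integration onto $X$:
\[\int_{T^*X}\ch[A]\wedge \pi^*\bigl(f^*[\rd V_Y]\wedge Td(X)\bigr) = \int_X \pi_*\ch[A]\wedge f^*[\rd V_Y]\wedge Td(X).\]
Since $f^*[\rd V_Y]$ is already of top degree $2n$ on $X$, only the $0$-form piece of $\pi_*\ch[A]\wedge Td(X)$ can contribute, and since $Td(X)$ begins with $1$ this piece is precisely $\ch_0[A]$. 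Combined with $\int_X f^*[\rd V_Y] = \deg(f)$, the second term becomes $\ch_0[A]\deg(f)$, and rearranging gives the asserted formula in the smooth case.

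For the continuous case, with $A$ of order $0$, I would argue by approximation. Multiplication by a continuous matrix-valued function is norm-continuous on $L^2$, so $f\mapsto A_{p_Y\circ f}$ is norm-continuous in $f\in C(X,Y)$, and $A_{p_Y\circ f}$ remains Fredholm because commutators $[A,g]$ with continuous $g$ are compact for a zeroth-order elliptic $A$. I would approximate $f$ uniformly by a smooth $f_k:X\to Y$; for $k$ large $f_k$ is homotopic to $f$, so $\deg(f_k)=\deg(f)$, while norm convergence of $p_Y\circ f_k\to p_Y\circ f$ together with stability of the Fredholm index forces $\ind(A_{p_Y\circ f_k})=\ind(A_{p_Y\circ f})$. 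Passing to the limit in the smooth identity yields the continuous statement.

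The subtlest step is the fiber-integration argument in the second paragraph: one must confirm that the $0$-form component of $\pi_*\ch[A]\wedge Td(X)$ really coincides with the quantity denoted $\ch_0[A]$ in the statement, which is exactly the point where the normalization of the Todd class and the compact support of $\ch[A]$ enter. Once the smooth version is in hand, the continuous extension is comparatively soft, relying only on standard mapping properties of zeroth-order pseudo-differential operators and homotopy invariance of both the degree and the Fredholm index.
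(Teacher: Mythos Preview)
Your proposal is correct and follows essentially the same route as the paper: Atiyah--Singer combined with Lemma~\ref{factoras} and Theorem~\ref{degsatseven}, then splitting off the constant $2^{n-1}$ piece and using that $f^*[\rd V_Y]$ is top-degree so only the degree-zero part of $\pi_*\ch[A]\wedge Td(X)$ survives. Your treatment of the continuous case via smooth approximation and norm continuity is a bit more explicit than the paper's one-line appeal to homotopy invariance of both sides, but the content is the same.
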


\begin{proof}
By Lemma \ref{factoras}, Theorem \ref{degsatseven} and the Atiyah-Singer index theorem we have that 
\begin{align*}
\ind(A_{p_Y\circ f})=\int_X& \ch[f^*p_Y]\wedge  \pi_*\ch [A]\wedge Td(X)=\\
=&\int_X (2^{n-1}+f^*\rd V_Y)\wedge \pi_*\ch[A]\wedge Td(X)=\\
&\quad=\ch_0[A]\deg(f)+2^{n-1}\int_X  \pi_*\ch[A]\wedge Td(X)=\\
&\quad\quad\quad=\ch_0[A]\deg(f)+2^{n-1}\ind(A).
\end{align*}
Since both $\ind(A_{p_Y\circ f})$ and $\deg(f)$ are well defined homotopy invariants for continuous $f$ when $A$ is of order $0$ the final statement of the Theorem follows.
\end{proof}

To deal with analytic formulas for the mapping degree when $f$ is not smooth will require some more concrete information about Schatten class properties of pseudo-differential operators.

\begin{lem}
\label{firstschatten}
A pseudo-differential operator $b$ of order $-1$ satisfies $b\in \ellL^q(L^2(X))$ for any $q>\dim(X)$.
\end{lem}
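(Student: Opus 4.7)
The plan is to factor $b$ through a reference order $-1$ operator whose Schatten class properties are well understood, and then invoke the ideal property of $\ellL^q$. Fix a positive elliptic self-adjoint pseudo-differential operator $\Lambda$ of order $1$ on $X$, for instance $\Lambda=(1+\Delta_g)^{1/2}$ for some Riemannian metric $g$. By Weyl's law for positive elliptic self-adjoint operators on a closed manifold, the eigenvalues of $\Lambda$ grow like $c\,j^{1/n}$ with $n=\dim(X)$, so the singular values of $\Lambda^{-1}$ decay like $j^{-1/n}$. Thus $\Lambda^{-1}\in\ellL^q(L^2(X))$ precisely when $\sum_j j^{-q/n}<\infty$, i.e.\ when $q>n$.

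Next, the composition $\Lambda\circ b$ is a pseudo-differential operator of order $1+(-1)=0$, hence extends to a bounded operator on $L^2(X)$. Writing $b=\Lambda^{-1}\circ(\Lambda\circ b)$ and using that $\ellL^q$ is a two-sided ideal in $\Bo(L^2(X))$ immediately gives $b\in\ellL^q(L^2(X))$ for every $q>n$.

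An alternative route, closer in spirit to the kernel-based approach emphasised in the paper through Theorem \ref{russothmeven}, would start from the observation that the Schwartz kernel of $b$ is smooth off the diagonal and, in any fixed coordinate chart, satisfies $|k(x,y)|\lesssim|x-y|^{1-n}$ near the diagonal (a standard consequence of the symbolic calculus). A polar-coordinate calculation then gives, for each fixed $y$ and $q'=q/(q-1)$,
\[
\int|k(x,y)|^{q'}\,\rd x\;\lesssim\;\int_0^{\epsilon}r^{(1-n)q'+n-1}\,\rd r\;+\;C,
\]
which is finite uniformly in $y$ precisely when $(1-n)q'+n>0$, equivalently $q'<n/(n-1)$, equivalently $q>n$. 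The same bound applies to $k^*$, and Russo's theorem then concludes the argument for $n\geq 2$ (where the constraint $q>n$ already implies $q>2$). The remaining case $n=1$ is handled separately: an order $-1$ operator on a one-dimensional compact manifold has a continuous kernel and is Hilbert--Schmidt, hence in every $\ellL^q$ with $q\geq 2$, and in fact in $\ellL^q$ for all $q>1$ by the spectral argument above applied to $\Lambda^{-1}$.

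The main technical input in either route is Weyl's law (or, equivalently, the diagonal kernel bound); once it is granted, the rest of the proof is formal. I expect no real obstacle apart from keeping the bookkeeping for $n=1$ separate, since Russo's estimate requires $q>2$ while the statement only demands $q>\dim(X)$.
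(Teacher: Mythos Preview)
Your primary argument is correct and is essentially identical to the paper's own sketch: factor $b=\Lambda^{-1}\cdot(\Lambda b)$ with $\Lambda=(1-\Delta_X)^{1/2}$, observe that $\Lambda b$ is order $0$ hence bounded, and invoke Weyl's law to place $\Lambda^{-1}$ in $\ellL^q$ for $q>\dim(X)$. The alternative kernel route via Russo's theorem is not used in the paper for this lemma (that method is reserved for the H\"older commutator estimate in Theorem~\ref{commsats}), but it is a legitimate second proof; one small caveat is that for $n=1$ an order $-1$ kernel can have a logarithmic diagonal singularity rather than being continuous, though it is still in $L^2(X\times X)$, so the Hilbert--Schmidt conclusion stands and your spectral argument covers that case anyway.
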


Lemma \ref{firstschatten} is proved by using a rather standard technique for pseudo-differential operators. In Theorem \ref{commsats}, when we prove a similar result for H\"older continuous functions we will need some heavier machinery. We include a sketch of the proof of Lemma \ref{firstschatten} just to highlight the difference in methods. Letting $\Delta_X$ denote the second order Laplace-Beltrami operator, the operator $(1-\Delta_X)^{1/2}b$ is of order $0$ whenever $b$ is of order $-1$. Thus $b=(1-\Delta_X)^{-1/2}(1-\Delta_X)^{1/2}b$ and since $(1-\Delta_X)^{1/2}b$ is a bounded operator the Lemma follows if $(1-\Delta_X)^{-1/2}$ is in the Schatten class for any $q>\dim(X)$. This fact follows from the fact that the $k$:th eigenvalue of the Laplacian behaves like $-k^{2/\dim(X)}$ as $k\to \infty$, a statement that goes back to \cite{weyl}. 

\begin{lem}
\label{homgenlemtwo}
The pseudo-differential operator $b$ of order $0$ has an integral kernel $T\in C^\infty(X\times X\setminus D)$ satisfying the estimate $|T(x,y)|\lesssim |x-y|^{-\dim(X)-\epsilon}$ almost everywhere for any $\epsilon>0$, here $D$ denotes the diagonal in $X\times X$.
\end{lem}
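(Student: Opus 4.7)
The plan is to work locally: using a smooth partition of unity subordinate to a coordinate atlas on $X$ I would reduce to the case where $b$ is supported in a single chart and takes the form
\[
bu(x) = (2\pi)^{-n}\iint e^{i(x-y)\cdot\xi}\,a(x,\xi)\,u(y)\,dy\,d\xi,\qquad n:=\dim X,
\]
for a symbol $a\in S^0$; the error from the localization is a smoothing operator and contributes a globally smooth, bounded kernel. The Schwartz kernel of the localized piece is the oscillatory integral $T(x,y) = (2\pi)^{-n}\int e^{i(x-y)\cdot\xi}a(x,\xi)\,d\xi$.

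Smoothness off the diagonal then follows by repeated integration by parts: for $x\neq y$ the operator $L = \sum_j \frac{-i(x_j-y_j)}{|x-y|^2}\partial_{\xi_j}$ satisfies $Le^{i(x-y)\cdot\xi}=e^{i(x-y)\cdot\xi}$, and $N$ integrations by parts replace $a$ by $(L^*)^N a$, which decays like $(1+|\xi|)^{-N}$, making the integral absolutely convergent. Applying the same device after differentiating in $x$ and $y$ shows that $T\in C^\infty(X\times X\setminus D)$.

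For the size estimate near the diagonal I would perform a dyadic decomposition $a=\sum_{k\geq 0}a_k$ with $a_k$ supported in $\{|\xi|\sim 2^k\}$, giving $T = \sum_k T_k$. Each piece admits two competing bounds: the trivial estimate $|T_k(x,y)|\lesssim 2^{kn}$ from the volume of the frequency shell, and the integration-by-parts estimate $|T_k(x,y)|\lesssim 2^{kn}(2^k|x-y|)^{-N}$ for any $N$, so
\[
|T_k(x,y)|\lesssim 2^{kn}\min\!\left\{1,(2^k|x-y|)^{-N}\right\}.
\]
Splitting the sum at $2^{k_0}\sim |x-y|^{-1}$, the low-frequency contribution is $\sum_{k\leq k_0}2^{kn}\lesssim |x-y|^{-n}$, while choosing $N>n$ the high-frequency contribution is $|x-y|^{-N}\sum_{k>k_0}2^{k(n-N)}\lesssim |x-y|^{-n}$. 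Summing, $|T(x,y)|\lesssim |x-y|^{-n}$, which for small $|x-y|$ is sharper than the claimed bound $|x-y|^{-n-\epsilon}$.

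The argument is standard oscillatory integral bookkeeping, essentially contained in \cite{hormtre}; the only technical point is the uniformity of the constants in $(x,y)$, which holds because all $(x,y)$-dependence of $a$ enters through the $S^0$-seminorms and not through the support. The $\epsilon$-slack in the stated exponent is harmless and simply provides convenient room for the subsequent H\"older estimates.
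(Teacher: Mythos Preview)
Your argument is correct and is precisely the standard oscillatory-integral/dyadic-decomposition proof one finds in the references; the paper itself does not give a proof of this lemma but simply cites Theorem~2.53 of Folland's \emph{Harmonic Analysis in Phase Space}, so your write-up essentially supplies what the cited reference contains. In fact you obtain the sharper Calder\'on--Zygmund bound $|T(x,y)|\lesssim |x-y|^{-\dim X}$, and on the compact manifold $X$ this immediately implies the stated $|x-y|^{-\dim X-\epsilon}$ bound, which is all the paper needs for the subsequent H\"older estimates.
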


Here we use the notation $a\lesssim b$ if there is a constant $C>0$ such that $a\leq Cb$. Observe that the estimate on $T$ only holds for $x \neq y$ so the integral operator defined by $T$ must be realized as a principal value. We will not prove Lemma \ref{homgenlemtwo}, but refer to Theorem $2.53$ of \cite{follandphase}. 

We will use the notation $F^{q,r}_\alpha(X)$ for the space of functions $a\in L^\infty(X)$ such that 
\[J_\alpha(a):(z,w)\mapsto |a(x)-a(y)|/|x-y|^\alpha\] 
is in the mixed $L^P$-space $L^{(r,q)}(X\times X)$. Observe that for any $q$ and $r$ there is an inclusion $C^\alpha(X)\subseteq F^{q,r}_\alpha(X)$ since if $a\in C^\alpha(X)$ then the function $J_\alpha(a)$ is bounded. To be a bit more specific, if we use the notation $|a|_\alpha$ for the H\"older constant of $a$, we have the norm estimate 
\[ \|J_\alpha(a)\|_{L^{(r,q)}}\leq \|J_\alpha(a)\|_{L^{(\infty,\infty)}}=|a|_\alpha,\] 
for any $r,q$. The function space $F^{q,r}_\alpha(X)$ becomes a Banach space in the norm $a\mapsto \|a\|_{L^\infty}+\|J_\alpha(a)\|_{L^{(r,q)}}$. A straight-forward estimate shows that there is a bounded embedding 
\[F^{q,r}_\alpha(X)\subseteq VMO(X)\] 
if $\alpha rq\geq n(r+q)$. The Gagliardo characterization of Sobolev spaces implies that whenever $0<s<1$ and $1<q<\infty$ then 
\[W^{s,q}(X)= F^{q,q}_{\alpha}(X),\] 
where $\alpha=n/q+s$. Since $X$ is compact, this implies that the spaces $F^{q,r}_{\alpha}(X)$ are always equipped with an inclusion in some Sobolev space. We will let $\pi:L^\infty(X)\to \Bo(L^2(X))$ denote the representation given by pointwise multiplication. 

\begin{sats}
\label{commsats}
If $F$ is a pseudo-differential operator of order $0$ on a compact manifold $X$ without boundary and $a\in F^{q,r}_\alpha(X)$ then the operator $[F,\pi(a)]$ is Schatten class of order $q$ and satisfies the norm estimate
\begin{equation}
\label{normestcomm}
\|[F,\pi(a)]\|_{\ellL^q(L^2(X))}\lesssim \|J_\alpha(a)\|_{(r,q)},
\end{equation}
whenever
\begin{equation}
\label{degcond}
\alpha\in]0, 1], \quad q\geq 2\quad\mbox{and}\quad \frac{n}{\alpha}<\frac{rq}{r+q}.
\end{equation}
\end{sats}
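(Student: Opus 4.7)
The plan is to identify the Schwartz kernel of $[F,\pi(a)]$, control it pointwise, bound its mixed $(q',q)$-norm and that of its adjoint, and then conclude with Russo's inequality (Theorem \ref{russothmeven}). Since $F$ is bounded on $L^2(X)$ (being a $0$-th order pseudo-differential operator) and $a \in F^{q,r}_\alpha(X) \subseteq L^\infty(X)$, the commutator $[F,\pi(a)]$ is a well-defined bounded operator whose kernel off the diagonal is
\[
K(x,y) = T(x,y)\bigl(a(y)-a(x)\bigr),
\]
where $T$ is the distributional kernel of $F$ from Lemma \ref{homgenlemtwo}. Using the estimate of Lemma \ref{homgenlemtwo} together with the tautology $|a(y)-a(x)| = |x-y|^\alpha\, J_\alpha(a)(x,y)$ gives, for any $\epsilon>0$,
\[
|K(x,y)| \lesssim |x-y|^{-n+\alpha-\epsilon}\, J_\alpha(a)(x,y).
\]

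The main estimate is for $\|K\|_{q',q}$. I would choose exponents $s, r \in [1,\infty]$ with $\tfrac{1}{s}+\tfrac{1}{r}=\tfrac{1}{q'}$ and apply Hölder's inequality in the $x$-variable to obtain
\[
\Bigl(\int_X |K(x,y)|^{q'}\,\rd x\Bigr)^{1/q'} \lesssim \Bigl(\int_X |x-y|^{-(n-\alpha+\epsilon)s}\,\rd x\Bigr)^{1/s}\; \|J_\alpha(a)(\cdot,y)\|_{L^r(X)}.
\]
The first factor is finite uniformly in $y$ iff $(n-\alpha+\epsilon)s < n$; a short rearrangement shows this is equivalent to $n/(\alpha-\epsilon) < rq/(r+q)$. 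The strict inequality in \eqref{degcond} allows us to pick $\epsilon>0$ small enough that this holds, and a parallel check shows $s,r$ can be taken in $[1,\infty]$. Taking the $L^q$-norm in $y$ yields $\|K\|_{q',q} \lesssim \|J_\alpha(a)\|_{(r,q)}$. Because $J_\alpha(a)(x,y)$ is symmetric in $(x,y)$ and Lemma \ref{homgenlemtwo} applied to the formal adjoint $F^*$ gives the same size estimate on $T(y,x)$, the kernel $K^*(x,y)=\overline{K(y,x)}$ admits the identical pointwise bound, hence $\|K^*\|_{q',q}\lesssim \|J_\alpha(a)\|_{(r,q)}$ by the same argument. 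Russo's inequality \eqref{russoesteven} then delivers the norm estimate \eqref{normestcomm}.

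The main obstacle is the $\epsilon$-loss in Lemma \ref{homgenlemtwo}: $T$ is only controlled by $|x-y|^{-n-\epsilon}$ for arbitrarily small, but nonzero, $\epsilon>0$. Accommodating this loss is precisely why the hypothesis in \eqref{degcond} is a strict inequality rather than equality; the strictness gives the slack needed to shrink $\epsilon$ before applying Hölder. A secondary technical issue is that $T$ is a priori a principal-value kernel, but the factor $a(y)-a(x)$ provides enough smoothing at the diagonal that the pointwise estimate on $K$ is integrable for $\alpha>\epsilon$, so $K$ defines a bona fide (non-principal-value) integral operator to which Russo's theorem can be applied directly. The edge case $q=2$ (not strictly covered by Theorem \ref{russothmeven}) can be handled separately by the Hilbert--Schmidt identification $\|K\|_{\ellL^2}=\|K\|_{L^2(X\times X)}$ and the same Hölder argument with $s=r=2$.
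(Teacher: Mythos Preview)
Your proposal is correct and follows essentially the same route as the paper: bound the commutator kernel pointwise by $J_\alpha(a)(x,y)\,|x-y|^{-(n-\alpha+\epsilon)}$, split the inner $L^{q'}$-integral via H\"older (your pair $(s,r)$ with $1/s+1/r=1/q'$ is exactly the paper's pair $(t'q',tq')$ with $t=r/q'$), and feed the resulting mixed-norm bounds into Russo's inequality. One cosmetic slip: you write ``choose exponents $s,r$'' as if both were free, but $r$ is fixed by the hypothesis, so $s$ is determined; the check that $s\in[1,\infty]$ is the paper's verification that $t\geq 1$, which follows from $rq>r+q$ (a consequence of \eqref{degcond} since $n\geq\alpha$). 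Your explicit treatment of the adjoint kernel, the principal-value issue, and the borderline $q=2$ case are points the paper leaves implicit.
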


Observe that in the limit case $r=\infty$, when $a$ is H\"older continuous, the condition on $q$ is precisely $q>\max(n/\alpha,2)$. The theorem can be reduced to a local claim so we will prove the theorem by describing Schatten class properties of the commutator $[F,\pi(a)]$ in terms of its local nature as in Theorem \ref{russothmeven}.

\begin{proof}
It is sufficient to prove that if $F$ is a properly supported pseudo-differential operator of order $0$ in $\R^n$ and $a\in F^{q,r}_\alpha(\R^n)$ has compact support, then the operator $[F,\pi(a)]$ is Schatten class of order $q$ for $q,r,\alpha$ satisfying the hypothesis of the theorem. Since $F$ is a pseudo-differential operator of order $0$, the operator $F$ can by Lemma \ref{homgenlemtwo} be represented by an integral kernel which is pointwise bounded by $|x-y|^{-n-\epsilon}$ for some $\alpha>\epsilon>0$. Thus the integral kernel of $[F,\pi(a)]$ is bounded by $|a(x)-a(y)||x-y|^{-n-\epsilon}$. 

Let us set $t:=r/q'$ and consider the number 
\[C(\alpha,\epsilon,t,q):=\sup_{y\in B(0,R)}\left(\int_{B(0,R)} |x-y|^{-(n+\epsilon-\alpha)t'q'}\rd V(x)\right)^{1/t'q'}.\]
The function $x\mapsto |x|^{-\beta}$ is integrable whenever $\beta<n$, so $C(\alpha,\epsilon,t,q)$ is finite if $(n+\epsilon-\alpha)t'q'<n$, that is if $n/(\alpha-\epsilon)<rq/(r+q)$. 

While $F$ is properly supported and $a$ has compact support, we can take $\chi,\chi'\in C^\infty_c(\R^n)$ such that $\chi[F,\pi(a)]\chi'=[ F,\pi(a)]$. If $t\geq 1$ and $q\geq 2$ then for some large $R$ 
\begin{align*}
\|[F,\pi(a&)]\|_{\ellL^q(L^2(\R^n))}\lesssim \left(\|\chi J_{-n-\epsilon}(a)\chi'\|_{q',q}\|\chi' J_{-n-\epsilon}(a)\chi\|_{q',q}\right)^{1/2}\lesssim\\
\lesssim& \left(\int _{B(0,R)}\left(\int_{B(0,R)} \frac{|a(x)-a(y)|^{q'}\rd V(x)}{|x-y|^{-(n+\epsilon)q'}}\right)^{q/q'}\rd V(y)\right)^{1/q}=\\
=&\left(\int _{B(0,R)}\left(\int_{B(0,R)} J_\alpha(a)(x,y)^{q'}|x-y|^{-(n+\epsilon-\alpha)q'}\rd V(x)\right)^{q/q'}\rd V(y)\right)^{1/q}\leq\\
\leq&\; C(\alpha,\epsilon,t,q)\left(\int _{B(0,R)}\left(\int_{B(0,R)} J_\alpha(a)(x,y)^{tq'}\rd V(x)\right)^{q/tq'}\rd V(y)\right)^{1/q}= \\
=&\;C(\alpha,\epsilon,t,q)\|J_\alpha(a)\|_{(tq',q)},
\end{align*}
where the fourth inequality follows from H\"older inequality. The condition that $t\geq 1$ is equivalent to that $rq-q-r\geq 0$. However, if $n/\alpha<rq/(r+q)$ then $rq> \frac{n}{\alpha}(q+r)\geq q+r$ since $n\geq \alpha$. Therefore the operator $[F,\pi(a)]$ is Schatten class of order $q$ whenever $n/(\alpha-\epsilon)<rq/(r+q)$. Since $\epsilon$ is arbitrary the Theorem follows.
\end{proof}

Let us digest on the choice of the function spaces $F^{q,r}_\alpha$ in Theorem \ref{commsats}. The reason for choosing these spaces is that they provide a natural setting for using the theorem of Russo. Since our intention is to perform degree calculations in these function spaces, it is of interest to see what happens in the case of Sobolev spaces. If $\alpha=s+n/q$; then the condition \eqref{degcond} of Theorem \ref{commsats} is true if 
\[2n<\alpha q\quad\Longleftrightarrow\quad sq>n.\] 
This means that for any parameters $s,q$ satisfying the sufficient conditions making the commutators Schatten class, the Sobolev embedding implies that $F^{q,q}_\alpha(X)=W^{s,q}(X)\subseteq C^{\alpha_0}(X)$ where $\alpha_0=s-n/q=\alpha-2n/q$. 

\section{Index of a H\"older continuous twist}

In this section we will combine Theorem \ref{connesindexeven} with Theorem \ref{commsats} into index formulas for certain elliptic pseudo-differential operators twisted by H\"older continuous vector bundles. If $p:X\to \Ko(\C^N)$ is a continuous projection-valued function, we will use the notation $E_p$ for the vector bundle over $X$ corresponding to $p$ via Serre-Swan's theorem. 

When $A$ is an elliptic differential operator from the vector bundle $E$ to the vector bundle $E'$, we want to consider the, possibly unbounded, operator $A_p:=p(A\otimes \id_{\C^N})p$ which is called the twist of $A$ by $p$ and acts between the vector bundle $E\otimes E_p$ and $E'\otimes E_p$. However, unless $A$ is of order $0$, we must assume that $p$ is smooth to ensure that $A_p$ is a densely defined Fredholm operator. In the case $p$ is smooth, Lemma \ref{factoras} and the Atiyah-Singer index theorem implies that
\[\ind(A_p)=\int_{T^*X}\pi^*\ch[p]\wedge\ch[A]\wedge \pi^*Td(X),\]
and we also have the identity
\begin{equation}
\label{afind}
\ind(A_p)=\ind(p(A(1+A^*A)^{-1/2}\otimes \id_{\C^N})p),
\end{equation}
because $1+A^*A$ is strictly positive. The right-hand side of \eqref{afind} is well defined for continuous $p$ and, as we will see, it can be calculated for H\"older continuous projections by means of Theorem \ref{connesindexeven} using a certain Fredholm module we associate with $A$. To construct this Fredholm module, we start by defining the odd, self-adjoint operator
\[\tilde{A}:=
\begin{pmatrix}
0& A\\
A^*& 0 \end{pmatrix}\]
on $L^2(X,E'\oplus E)$ which is graded by letting $L^2(X,E')$ be the even part and let $L^2(X,E)$ be the odd part. We define the mapping 
\[\phi:\R\to \R,\quad u\mapsto u(1+u^2)^{-1/2}\quad \mbox{and the operator}\quad F_A:=\phi(\tilde{A}).\] 
The operator $F_A$ is an odd, self-adjoint operator of order $0$. However, the square of the operator $F_A$ can be calculated as
\[F_A^2=\tilde{A}^2(1+\tilde{A}^2)^{-1}=1-(1+\tilde{A}^2)^{-1}\neq 1.\]
To mend the problem $F_A^2\neq 1$, we replace $\phi$ by the function $\tilde{\phi}:\R\to M_2(\C)$ defined as 
\[\tilde{\phi}(u):=\begin{pmatrix}
\phi(u)&(1+u^2)^{-1/2}\\
(1+u^2)^{-1/2}& -\phi(u)\end{pmatrix}.\]
The function $\tilde{\phi}$ satisfies $\tilde{\phi}(u)^2=1$. If we equip $\C^2$ with the grading from the involution
\[\gamma_{\C^2}:=1\oplus (-1),\] 
the operator $\tilde{F}_A:=\tilde{\phi}(\tilde{A})$ is an odd, self adjoint operator on the graded tensorproduct $\C^2\otimes  L^2(X,E\oplus E')$. To simplify notations we set $\Eg:=\C^2\otimes(E'\oplus E)$. The operator $\tilde{F}_A$ does satisfy that $\tilde{F}_A^2=1$. The operator $\tilde{F}_A$ can be written as a matrix of operators as
\begin{equation}
\label{ffour}
\tilde{F}_A:=\begin{pmatrix}
F_A&(1+\tilde{A}^2)^{-1/2}\\
(1+\tilde{A}^2)^{-1/2}& -F_A\end{pmatrix}.
\end{equation}
Since pseudo-differential operators are only pseudo-local, we will use a parameter $t$ to extract the singular part of $\tilde{F}_A$. Let $A(t)$ denote the elliptic differential operator defined from $A$ dilated by the action of $t>0$ on $T^*X$. Set $\tilde{F}(t):=\tilde{F}_{A(t)}$. Define $W_0$ as the smooth pseudo-differential operator given by the orthogonal finite-rank projection onto $\ker (\tilde{A})$ and set $W_0^\perp:=1-W_0$. Observe that, since $\tilde{A}$ is elliptic, we have that $W_0\in C^\infty(X,E'\oplus E)\otimes_{alg} C^\infty(X,(E'\oplus E)^*)$. We define 
\[W:=\begin{pmatrix}
0&W_0\\
W_0&0\end{pmatrix}.\]

\begin{lem}
\label{homlem}
If $A$ is an elliptic differential operator, the operator valued function $t\mapsto \tilde{F}(t)$ satisfies 
\[\|\tilde{F}(t)-\tilde{F}-W\|_{\ellL^p(L^2(X,\mathcal{E}))}=\mathcal{O}(t^{-1})\quad\mbox{as}\quad t\to \infty,\]
where $\tilde{F}$ is the $0$-homogeneous part of $\tilde{F}_A$.
\end{lem}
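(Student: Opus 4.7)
The plan is to reduce $\tilde F(t)-\tilde F-W$ to a single functional-calculus expression in $\tilde A(t)$ whose Schatten norm can be controlled by a resolvent of $\tilde A$, and then invoke the Weyl-type Schatten estimate underlying Lemma \ref{firstschatten}.

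\textbf{Step 1: Identify $\tilde F$.} Since the off-diagonal entry $(1+\tilde A^2)^{-1/2}$ of $\tilde F_A$ has strictly negative order, it contributes nothing to the $0$-homogeneous part, whereas $F_A=\phi(\tilde A)$ has principal symbol $\sign(\sigma(\tilde A))$. Setting $\tilde F_\infty(u):=\mathrm{diag}(\sign(u),-\sign(u))$ with the convention $\sign(0):=0$, I would identify $\tilde F=\tilde F_\infty(\tilde A)$; in particular $\tilde F W_0=0$. The dilation $A\mapsto A(t)$ ($t>0$) preserves $\ker(\tilde A)$ and the signs of the nonzero eigenvalues of $\tilde A$, so $\tilde F_\infty(\tilde A(t))=\tilde F_\infty(\tilde A)=\tilde F$.

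\textbf{Step 2: Algebraic reduction.} Put $g:=\tilde\phi-\tilde F_\infty$, so by functional calculus $\tilde F(t)-\tilde F=g(\tilde A(t))$. A direct evaluation gives
\[g(0)=\tilde\phi(0)=\begin{pmatrix}0&1\\1&0\end{pmatrix},\]
which is exactly the $\mathbb C^2$-block describing $W$; hence $g(\tilde A(t))W_0=g(0)W_0=W$. Subtracting yields the clean identity
\[\tilde F(t)-\tilde F-W=g(\tilde A(t))\,W_0^\perp.\]

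\textbf{Step 3: Scalar estimate.} A direct computation using $\phi(u)-\sign(u)=-\sign(u)/\bigl[(1+u^2)^{1/2}((1+u^2)^{1/2}+|u|)\bigr]$ on $\mathbb R\setminus\{0\}$ and the off-diagonal entry $(1+u^2)^{-1/2}$ shows that $|g(u)|\le C(1+u^2)^{-1/2}$ uniformly in $u\in\mathbb R$. By the spectral theorem,
\[\|g(\tilde A(t))W_0^\perp\|_{\ellL^p}\le C\,\|(1+\tilde A(t)^2)^{-1/2}W_0^\perp\|_{\ellL^p}.\]

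\textbf{Step 4: Schatten bound with $t^{-1}$ scaling.} It remains to show the right-hand side is $O(t^{-1})$ in a suitable $\ellL^p$. Since $A$ is a differential operator of positive order $m$, dilation rescales its principal symbol by $t^m$, so $(1+\tilde A(t)^2)^{-1/2}$ is a pseudo-differential operator of order $-m$ whose symbol gains an extra factor of $t^{-m}$ at high frequency on $W_0^\perp$. Combined with the Weyl-law input to Lemma \ref{firstschatten} (which controls the $\ellL^p$ norm of $(1+\tilde A^2)^{-1/2}W_0^\perp$ for $p>\dim(X)/m$) this yields $\|(1+\tilde A(t)^2)^{-1/2}W_0^\perp\|_{\ellL^p}=O(t^{-m})=O(t^{-1})$. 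The main obstacle is precisely this last step: translating the geometric dilation on $T^*X$ into a quantitative gain in the Schatten norm. I would handle it by working in local coordinate patches and conjugating $A(t)$ by the unitary implementing the cotangent dilation, so that on each chart the Schatten estimate reduces to the known behaviour of the resolvent of $\tilde A$, and then patching with a partition of unity while absorbing the commutators (which are lower order) into the error.
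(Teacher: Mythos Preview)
Your Steps 1--3 are correct and amount to exactly the same reduction the paper performs, just packaged more cleanly via functional calculus: the paper writes out the two matrix entries $a_1=(1+t^2\tilde A^2)^{-1/2}-W_0$ and $a_2=\phi(t\tilde A)-\sign(\tilde A)$ separately, while you combine them into $g(\tilde A(t))W_0^\perp$. In particular your identification of the $0$-homogeneous part $\tilde F$ with $\mathrm{diag}(\sign(\tilde A),-\sign(\tilde A))$ matches what the paper uses (it writes the diagonal of $\tilde F$ as $\tilde A\,W_0^\perp|\tilde A|^{-1}W_0^\perp$).

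Where you diverge from the paper is Step 4, and there you are over-engineering the argument. There is no need for local charts, partitions of unity, or a ``unitary implementing the cotangent dilation'' (which in any case does not exist globally on a compact $X$). The paper simply uses a spectral factorization: writing (as the paper implicitly does) $\tilde A(t)=t\tilde A$, one has on $\ker(\tilde A)^\perp$
\[
(1+t^2\tilde A^2)^{-1/2}W_0^\perp \;=\; t^{-1}\bigl(W_0^\perp|\tilde A|^{-1}W_0^\perp\bigr)\cdot\bigl(t|\tilde A|(1+t^2\tilde A^2)^{-1/2}\bigr),
\]
and similarly for the diagonal entry. The second factor is a bounded function of $t\tilde A$, uniformly in $t$; the first factor is a \emph{fixed} pseudo-differential operator of negative order, hence lies in $\ellL^p$ for all $p$ large enough by the Weyl-law input to Lemma \ref{firstschatten}. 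The $t^{-1}$ is then explicit. This replaces your entire last paragraph with two lines of functional calculus, and is precisely what the paper does. Your scalar bound $|g(u)|\le C(1+u^2)^{-1/2}$ already contains the mechanism for this factorization; you just did not exploit it directly.
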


\begin{proof}
The operator $\tilde{F}(t)-\tilde{F}-W$ is a matrix consisting of terms of the form
\begin{align*}
a_1&=(1+t^2\tilde{A}^2)^{-1/2}-W_0=W_0^\perp(1+t^2\tilde{A}^2)^{-1/2}W_0^\perp=\\
&=t^{-1}W_0^\perp\tilde{A}^{-1}W_0^\perp T_1 \qquad\mbox{and}\\
a_2&=\tilde{A}(t(1+t^2\tilde{A}^2)^{-1/2}-W_0^\perp|\tilde{A}|^{-1}W_0^\perp)=\\
&=\tilde{A}(t(1+t^2\tilde{A}^2)^{-1/2}-|\tilde{A}|^{-1})=t^{-1}W_0^\perp|\tilde{A}|^{-1}W_0^\perp T_2,
\end{align*}
for some $T_1,T_2\in \Bo(L^2(X))$. Since both $a_1$ and $a_2$ are pseudo-differential operators of order lower than $-n/p$, it follows that their $\ellL^p$-norm behaves like $t^{-1}$ as $t\to \infty$ and the Lemma follows.
\end{proof}

As above, $C^\alpha(X)$ denotes the Banach algebra of H\"older continuous functions of exponent $\alpha\in]0,1]$. If $E$ and $E'$ are two smooth vector bundles over $X$ we let $\pi:C^\alpha(X)\to \Bo(L^2(X,E\oplus E'))$ denote the even representation given by pointwise multiplication. Define $\tilde{\pi}:C^\alpha(X)\to \Bo(L^2(X,\Eg))$ by letting $\tilde{\pi}:=\pi\oplus 0$ under the isomorphism $L^2(X,\Eg)\cong \C^2\otimes L^2(X,E'\oplus E)\cong L^2(X,E'\oplus E)\oplus L^2(X,E'\oplus E)$.

\begin{sats}
\label{psumfred}
If $A$ is an elliptic differential operator of positive order between two vector bundles $E$ and $E'$ over $X$ and $q>\max(\dim(X)/\alpha,2)$, then the pair $(\tilde{\pi},\tilde{F}_A)$ is a $q$-summable even Fredholm module over $C^\alpha(X)$.
\end{sats}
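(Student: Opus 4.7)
My plan is to split the Fredholm module axioms into the algebraic (self-adjointness, squaring to one, odd parity) and the analytic ($q$-summability of commutators), and dispose of the algebraic ones immediately. The modifications $\tilde{\phi}$ made to $\phi$ earlier in the section were designed precisely so that $\tilde{F}_A=\tilde{\phi}(\tilde{A})$ satisfies $\tilde{F}_A^2=1$ and $\tilde{F}_A^*=\tilde{F}_A$; the grading $\gamma_{\C^2}=1\oplus(-1)$ on the $\C^2$-factor, combined with the grading $L^2(X,E')\oplus L^2(X,E)$, ensures that $\tilde{F}_A$ is odd because the diagonal blocks of $\tilde{F}_A$ in the $\C^2$-decomposition are $\pm F_A$, each of which interchanges $L^2(X,E)$ and $L^2(X,E')$, while the off-diagonal blocks $(1+\tilde{A}^2)^{-1/2}$ preserve each summand but are multiplied by the odd element in $\C^2$. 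Similarly $\tilde{\pi}=\pi\oplus 0$ is even. So the only real task is the commutator estimate.

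Writing $\tilde{\pi}(a)=\pi(a)\oplus 0$ under $\C^2\otimes L^2(X,E'\oplus E)\cong L^2(X,E'\oplus E)\oplus L^2(X,E'\oplus E)$, a direct computation using \eqref{ffour} shows
\[[\tilde{F}_A,\tilde{\pi}(a)]=\begin{pmatrix}[F_A,\pi(a)] & -\pi(a)(1+\tilde{A}^2)^{-1/2}\\(1+\tilde{A}^2)^{-1/2}\pi(a) & 0\end{pmatrix}.\]
The two off-diagonal entries are easy: since $A$ has positive order, $(1+\tilde{A}^2)^{-1/2}$ is a pseudo-differential operator of order $-\mathrm{ord}(A)\leq -1$, so by Lemma \ref{firstschatten} it already lies in $\ellL^q(L^2(X,\Eg))$ for every $q>\dim(X)$; multiplying by the bounded operator $\pi(a)$ keeps it in $\ellL^q$. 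All the analytic content is therefore concentrated in the diagonal block $[F_A,\pi(a)]$.

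To control that commutator I would decompose $\tilde F_A$ as in Lemma \ref{homlem} evaluated at $t=1$: $\tilde{F}_A=\tilde{F}+W+R$, where $\tilde{F}$ is the $0$-homogeneous classical pseudo-differential part of $\tilde{F}_A$, $W$ is the finite-rank smoothing operator built from the projection onto $\ker\tilde{A}$, and the remainder $R$ is the matrix of operators $a_1,a_2$ from the proof of Lemma \ref{homlem}, which are pseudo-differential of order $\le -1$. Both $W$ (finite rank, hence trace class) and $R$ (in $\ellL^q$ for $q>\dim(X)$ by Lemma \ref{firstschatten}) produce commutators with $\tilde{\pi}(a)$ that automatically lie in $\ellL^q$. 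For the remaining piece $[\tilde F,\tilde\pi(a)]$, apply Theorem \ref{commsats} componentwise, using the embedding $C^\alpha(X)\subseteq F^{q,\infty}_\alpha(X)$ with bound $\|J_\alpha(a)\|_{(\infty,q)}\leq|a|_\alpha\cdot\mathrm{vol}(X)^{1/q}$. The hypothesis \eqref{degcond} with $r=\infty$ reduces to $n/\alpha<q$ together with $q\geq 2$, which is exactly $q>\max(\dim(X)/\alpha,2)$, giving $[\tilde{F},\tilde{\pi}(a)]\in\ellL^q$.

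The main obstacle is handled by Lemma \ref{homlem}: the subtle point is that $F_A=\phi(\tilde{A})$ is not literally a classical pseudo-differential operator of order $0$, only modulo a smoothing term coming from the kernel of $\tilde{A}$ and a residual term of order $-\mathrm{ord}(A)$. Once that splitting is isolated, Theorem \ref{commsats} applied to the honest order-$0$ symbol $\tilde{F}$ does the work, while Lemma \ref{firstschatten} mops up the lower-order and smoothing remainders. Summing the three contributions and invoking the additive ideal property of $\ellL^q$ yields $[\tilde{F}_A,\tilde{\pi}(a)]\in\ellL^q(L^2(X,\Eg))$, completing the verification that $(\tilde{\pi},\tilde{F}_A)$ is a $q$-summable even Fredholm module over $C^\alpha(X)$.
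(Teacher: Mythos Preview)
Your argument is correct and follows the paper's proof almost exactly: the same block decomposition of $[\tilde{F}_A,\tilde{\pi}(a)]$, Lemma \ref{firstschatten} for the off-diagonal entries, and Theorem \ref{commsats} for $[F_A,\pi(a)]$. The one deviation is your detour through Lemma \ref{homlem}, based on the claim that $F_A$ is ``not literally a classical pseudo-differential operator of order $0$''. That claim is unfounded: since $\phi(u)=u(1+u^2)^{-1/2}$ is smooth (in particular at $0$, so $\ker\tilde{A}$ causes no trouble), Seeley's functional calculus makes $(1+\tilde{A}^2)^{-1/2}$ a classical pseudo-differential operator of order $-\mathrm{ord}(A)$, and composing with $\tilde{A}$ gives $F_A$ as a bona fide order-$0$ operator. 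The paper therefore applies Theorem \ref{commsats} directly to $F_A$ without any splitting. Your extra decomposition is harmless and the proof stands, but it is not needed.
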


\begin{proof}
The operator $\tilde{F}_A$ is an elliptic, self-adjoint pseudo-differential operator of order $0$ and $\tilde{F}_{A}^2=1$. Under the isomorphism $L^2(X,\Eg)= \C^2\otimes  L^2(X,E\oplus E')$ the decomposition \eqref{ffour} implies that
\begin{equation}
\label{ffive}
[\tilde{F}_A,\tilde{\pi}(a)]=
\begin{pmatrix}
[F_A,\pi(a)]&-\pi(a)(1+\tilde{A}^2)^{-1/2}\\
(1+\tilde{A}^2)^{-1/2}\pi(a)&0\end{pmatrix}.
\end{equation}
Since the order of $A$ is positive, these facts together with Theorem \ref{commsats} and Lemma \ref{firstschatten} imply that $(\tilde{\pi},\tilde{F}_A)$ is a $q$-summable even Fredholm module for any $q>\max(\dim(X)/\alpha,2)$.
\end{proof}

Let us represent the pseudo-differential operator $\tilde{F}$ by the integral kernel $\tilde{K}_A$ which is a conormal distribution section of the big $\Hom$-bundle $\Hom(\Eg,\Eg)$. By \eqref{ffour}, we can write $\tilde{K}_A=K_A\oplus (-K_A)$ where $K_A$ is a conormal distribution section of the big $\Hom$-bundle $\Hom(E\oplus E',E\oplus E')$. Observe that $K_A$ is defined by a smooth section $C^\infty(X\times X\setminus D, \Hom(E\oplus E',E\oplus E'))$.

We will use the notation $\Gamma_k$ for the subset of $\{1,2,3,4\}^{2k}$ consisting of all sequences  $(s_l)_{l=1}^{2k}$ satisfying the conditions that $s_1\neq 4$, $s_{2k}\neq 3$ and $s_l=3$ for some $l$ if and only if $s_{l+1}=4$. These conditions are motivated by the form of the commutator \eqref{ffive}. Let $w(I)$ denote the numeral of occurrences of $3$ in $I$. Take $\Gamma_k^w\subseteq \Gamma_k$ as the subset of sequences with $w(I)=w$. To a sequence $I\in \Gamma_k$ we will associate the sequence $\Lambda(I)=(i_l)_{l=1}^{2k}\in\{1,2,\ldots 2k\}^{2k}$ defined by 
\[i_l=\begin{cases}
l,\quad\mbox{if} \quad s_l=1,3\\
l+1,\quad\mbox{if} \quad s_l=2,4\end{cases},\]
where we identify $2k+1$ with $1$. For a sequence $I\in \Gamma_k$ we let 
\[\iota(I):=(-1)^{w(I)+\sum_{l=1}^{2k}i_l-l}.\] 
With a projection-valued function $p:X\to \Ko$ and a sequence $I\in \Gamma_k$ we associate the function 
\begin{equation}
\label{qidef}
Q_{I}^p(x_1,\ldots,x_{2k}):=\tra_{\C^{2^n}}\left(p(x_1)\prod_{l=1}^{2k}p(x_{i_{l}})\right).
\end{equation}
We also define
\[H_A(x_1,\ldots ,x_{2k}):= \suptra_{E\oplus E'}\left(\prod_{l=1}^{2k}K_{A}(x_{i},x_{i+1})\right).\]
A straight-forward calculation implies the following lemma:

\begin{lem}
\label{kalkdia}
For any projection-valued function $p:X\to \Ko(\C^N)$, the following identity holds:
\begin{align*}
\suptra_{(E\oplus E')\otimes \C^N}\left(p(x_1)\prod_{j=1}^{2k}(p(x_{j+1})-p(x_j))K_A(x_{j},x_{j+1})\right)&=\\
=\sum_{(s_l)_{l=1}^{2k}\in \Gamma_k^0} \iota(I)Q_{I}^p(x_1,\ldots,x_{2k}) &H_A(x_1,\ldots ,x_{2k}).
\end{align*}
\end{lem}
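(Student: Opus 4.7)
\emph{Proposal.} The plan is a direct multilinear expansion of the product on the left-hand side followed by exploitation of the tensor-product structure on $(E\oplus E')\otimes \C^N$ to factor the supertrace. Since the final formula is stated only for $I\in\Gamma_k^0$, I do not need to handle the $3$/$4$ entries of $\Gamma_k$ in this lemma; they will play a role in a subsequent refinement using the block structure \eqref{ffive} of $[\tilde F_A,\tilde\pi(a)]$.

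Each of the $2k$ factors $(p(x_{j+1})-p(x_j))K_A(x_j,x_{j+1})$ splits into two summands, so distributing produces $2^{2k}$ terms. I index these by sequences $(s_j)_{j=1}^{2k}\in\{1,2\}^{2k}$ with the convention that $s_j=1$ selects $-p(x_j)K_A(x_j,x_{j+1})$ and $s_j=2$ selects $+p(x_{j+1})K_A(x_j,x_{j+1})$; equivalently $i_j=j$ in the first case and $i_j=j+1$ in the second, matching the convention used in the definition of $Q_I^p$. The constraint in $\Gamma_k$ that $3$'s and $4$'s occur only in adjacent pairs, together with $s_1\neq 4$, forces any $I$ with $w(I)=0$ to lie in $\{1,2\}^{2k}$, so this indexing is a bijection between the expansion terms and $\Gamma_k^0$.

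After expansion a typical term reads
\[\pm\,p(x_1)\,p(x_{i_1})\,K_A(x_1,x_2)\,p(x_{i_2})\,K_A(x_2,x_3)\cdots p(x_{i_{2k}})\,K_A(x_{2k},x_1).\]
The projection values $p(x_l)$ act by multiplication on the auxiliary factor $\C^N$ while the kernels $K_A(x_j,x_{j+1})$ act on $E\oplus E'$, so they commute as endomorphisms of $(E\oplus E')\otimes \C^N$. Collecting them gives a pure tensor
\[\Bigl(p(x_1)\prod_{l=1}^{2k}p(x_{i_l})\Bigr)\otimes\Bigl(\prod_{l=1}^{2k}K_A(x_l,x_{l+1})\Bigr).\]
Because the $\Z/2$-grading on $(E\oplus E')\otimes \C^N$ lives only on the $E\oplus E'$ tensor factor, the supertrace factorizes as $\suptra_{(E\oplus E')\otimes\C^N}=\tra_{\C^N}\otimes\suptra_{E\oplus E'}$, and the two factors are recognized as $Q_I^p(x_1,\ldots,x_{2k})$ and $H_A(x_1,\ldots,x_{2k})$ respectively.

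It remains to identify the sign. The expansion sign is $(-1)^{\#\{j\,:\,s_j=1\}}$; since $2k$ is even this equals $(-1)^{\#\{j\,:\,s_j=2\}}$, and by construction $i_l-l$ equals $0$ when $s_l=1$ and $1$ when $s_l=2$, so it equals $(-1)^{\sum_l(i_l-l)}=\iota(I)$ for $I\in\Gamma_k^0$ (where $w(I)=0$). Summing over $\Gamma_k^0$ yields the stated identity. I do not anticipate any genuine obstacle --- the whole argument is combinatorial bookkeeping --- and the care required lies only in tracking indices and signs consistently with the definitions of $\Gamma_k$, $\Lambda(I)$ and $\iota(I)$.
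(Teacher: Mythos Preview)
Your proposal is correct and is precisely the ``straight-forward calculation'' the paper alludes to in lieu of a proof: expand the $2k$ binomials, identify the index set with $\Gamma_k^0=\{1,2\}^{2k}$, use that $p$ acts on the $\C^N$ factor while $K_A$ acts on $E\oplus E'$ to factor the supertrace, and match the sign to $\iota(I)$. There is nothing different to compare.
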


This Lemma describes the diagonal terms in the decomposition \eqref{ffive}. To describe the products with the off-diagonal terms, we need some notation for the integral kernels. Set $K_1=K_2=K_A$ and $K_3=K_4=W_0$. For $I\in\Gamma_k$ we define the integral kernel
\[H_{A,I}(x_1,\ldots ,x_{2k}):= \suptra_{E\oplus E'}\left(\prod_{l=1}^{2k}K_{s_l}(x_{i},x_{i+1})\right).\]

\begin{lem}
\label{wsumabsint}
The function
\begin{equation}
\label{wsum}
(x_1,\ldots,x_{2k})\mapsto  \sum_{(s_l)_{l=1}^{2k}\in \Gamma_k^w} \iota(I)Q_{I}^p(x_1,\ldots,x_{2k}) H_{A,I}(x_1,\ldots ,x_{2k})
\end{equation}
is absolutely integrable over $X^{2k}$ for all $w=0,1,\ldots, k$.
\end{lem}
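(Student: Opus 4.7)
The plan is to recognize each $w$-summand of \eqref{wsum} as the integral kernel of a trace-class operator product and then deduce absolute integrability from Theorem \ref{russotracethmeven}.

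First I would unpack the combinatorics of $\Gamma_k^w$. The conditions $s_1 \neq 4$, $s_{2k} \neq 3$, and $s_l = 3 \Leftrightarrow s_{l+1} = 4$ are precisely those satisfied by the index sequences for the $(1,1)$-block contributions to the product $\tilde\pi(p)\prod_{l=1}^{2k}[\tilde F_A, \tilde\pi(p)]$ under the $\C^2$-decomposition from \eqref{ffour}: adjacent $(3,4)$-pairs record excursions through the off-diagonal blocks of \eqref{ffive} that must return immediately to the diagonal, and $s_l \in \{1,2\}$ records the two summands of the commutator kernel $[F_A,\pi(p)](x,y) = (p(y) - p(x))K_A(x,y)$, with $\Lambda(I)$ distributing the arguments accordingly. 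In the definition of $H_{A,I}$ the off-diagonal operator $(1+\tilde A^2)^{-1/2}$ has already been replaced by the finite-rank smoothing $W_0$ via $K_3 = K_4 = W_0$.

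For fixed $w$ I would then group the terms of \eqref{wsum} by the placements of the $w$ disjoint $(3,4)$-pairs. For each such configuration, the partial sum over the remaining $\{1,2\}$-choices, weighted by the signs $\iota(I)$, coalesces — exactly as in the proof of Lemma \ref{kalkdia} — into the integral kernel of an operator of the form
\[
\pi(p)\,[F_A,\pi(p)]^{j_0}\,\bigl(\pi(p) W_0 \pi(p)\bigr)\,[F_A,\pi(p)]^{j_1}\,\cdots\,\bigl(\pi(p) W_0 \pi(p)\bigr)\,[F_A,\pi(p)]^{j_w}
\]
with $j_0 + j_1 + \cdots + j_w = 2k - 2w$, evaluated along the cyclic diagonal $(x_1, x_2, \ldots, x_{2k}, x_1)$.

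It remains to show each such operator is trace class with an absolutely integrable kernel. By Theorem \ref{commsats}, $[F_A, \pi(p)] \in \ellL^q(L^2(X))$ for every $q > \max(\dim(X)/\alpha, 2)$, and the kernel of $|[F_A, \pi(p)]|$ has controlled mixed $(q',q)$-norm. The factor $\pi(p) W_0 \pi(p)$ is of finite rank — hence trace class — since $W_0$ projects onto the finite-dimensional kernel of the elliptic operator $\tilde A$, and its kernel $p(x)W_0(x,y)p(y)$ is continuous and bounded. For $w \geq 1$ the finite-rank factor makes the entire product trace class; for $w = 0$ the hypothesis $2k \geq q$, standing in the range of dimensions relevant for $\cc_{2k}(\tilde\pi, \tilde F_A)$, together with Schatten H\"older places the product of $2k$ factors in $\ellL^q$ into $\ellL^1$. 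Applying the same reasoning to the integral operator obtained from the absolute values of the kernels, Theorem \ref{russotracethmeven} yields absolute convergence of the integral of the kernel product over $X^{2k}$. Summing over the finitely many placements of the $(3,4)$-pairs then gives the lemma. The principal obstacle is the combinatorial identification in the second step: matching $\iota(I)$, the sequence $\Lambda(I)$, and the assignment of $K_A$ versus $W_0$ kernels with the block-matrix expansion, and verifying that the expansion of each commutator $[F_A, \pi(p)]$ into its two terms produces precisely the signs and arguments prescribed by $\iota$ and $\Lambda$. This is a direct but bookkeeping-heavy extension of Lemma \ref{kalkdia} to configurations with $w>0$ off-diagonal excursions.
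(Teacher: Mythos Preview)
Your proposal is correct and follows essentially the same approach as the paper: identify the $\Gamma_k^w$-sum as the kernel of a product of operators built from $2(k-w)$ commutators $[F_A,\pi(p)]$ and $2w$ off-diagonal $W_0$-factors, then invoke Theorem~\ref{commsats} for the mixed-norm bounds and Theorem~\ref{russotracethmeven} for absolute integrability. Your grouping by the positions of the $(3,4)$-pairs is exactly the paper's decomposition of $\Gamma_k^w$ under the equivalence $1\sim 2$, $3\sim 4$, and your description of the $W_0$ contribution as $\pi(p)W_0\pi(p)$ is in fact slightly more precise than the paper's informal phrasing ``commutators between $p$ and $W_0$''.
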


This Lemma is a direct consequence of Theorem \ref{commsats} and Theorem \ref{russotracethmeven} since the sum over $\Gamma^w_k$ corresponds to the sum of the supertraces of the products between $p$ and $2(k-w)$ commutators between $p$ and $K_A$ and $2w$ commutators between $p$ and $W_0$. In fact, when $w>0$ the integral of \eqref{wsum} will be the trace of a finite rank operator. One can decompose the function \eqref{wsum} even further by decomposing $\Gamma_k^w$ into equivalence classes under the equivalence relation $1\sim 2$ and $3\sim 4$, which again will be a decomposition into absolutely integrable functions. 

\begin{sats}
\label{holderindexeven}
Suppose that $A$ is an elliptic differential operator of positive order on the compact manifold $X$ without boundary, acting from $E$ to $E'$. If $p:X\to \Ko(\C^N)$ is a projection valued, H\"older continuous function of exponent $\alpha$ and $2k-1>\max(\dim(X)/\alpha,2)$, the following index formula holds:
\begin{align*}
\ind&(p(F_{A,+}\otimes 1)p)=\langle \cc_k(\tilde{\pi}, \tilde{F}_A),p\rangle_{2k}=\\
&=(-1)^k\int _{X^{2k}} \suptra\left(p(x_1)\prod_{j=1}^{2k}(p(x_{j+1})-p(x_j))K_A(x_{j},x_{j+1})\right)\rd V_{X^{2k}}+\\
&+(-1)^k\sum_{w=1}^k\int _{X^{2k}} \sum_{(s_l)_{l=1}^{2k}\in \Gamma_k^w} \iota(I)Q_{I}^p(x_1,\ldots,x_{2k}) H_{A,I}(x_1,\ldots ,x_{2k})\rd V_{X^{2k}}
\end{align*}
where the trace is taken over $(E\oplus E')\otimes \C^N$ and we identify $x_1=x_{2k+1}$. All of the integrals are absolutely convergent.
\end{sats}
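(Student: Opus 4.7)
The plan is to invoke Connes' index formula and then make the resulting cyclic pairing explicit via Theorem \ref{russotracethmeven}. By Theorem \ref{psumfred}, the hypothesis $2k-1>\max(\dim X/\alpha,2)$ gives that $(\tilde{\pi},\tilde{F}_A)$ is a $q$-summable even Fredholm module over $C^\alpha(X)$ for some $q\leq 2k$, so the Connes--Chern character $\cc_k(\tilde{\pi},\tilde{F}_A)$ is a well-defined cyclic $2k$-cocycle and Theorem \ref{connesindexeven} applies. To identify $\ind\!_{\tilde{F}_A}[p]$ with $\ind(p(F_{A,+}\otimes 1)p)$ one uses the $2\times 2$-block form \eqref{ffour} of $\tilde{F}_A$ together with $\tilde{\pi}=\pi\oplus 0$; writing the compression $\tilde{\pi}_+(p)\tilde{F}_{A,+}\tilde{\pi}_-(p)$ inside $\C^2\otimes L^2(X,E'\oplus E)$, only the diagonal block $\pi_{E'}(p)F_{A,+}\pi_E(p)$ survives.

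For the integral formula, evaluating the cyclic pairing on $\ch_{2k}[p]=(k!)^{-1}\tra(p^{\otimes(2k+1)})$ yields
\[\langle\cc_k(\tilde{\pi},\tilde{F}_A),p\rangle_{2k}=(-1)^k\suptra\bigl(\tilde{\pi}(p)[\tilde{F}_A,\tilde{\pi}(p)]^{2k}\bigr).\]
Substituting the explicit commutator \eqref{ffive} and expanding the matrix product in the $\C^2$-decomposition produces a sum indexed by sequences $(s_l)_{l=1}^{2k}\in\{1,2,3,4\}^{2k}$ labelling the choice of matrix entry at each commutator factor: $s_l\in\{1,2\}$ selects one of the two terms in $[F_A,\pi(p)]=F_A\pi(p)-\pi(p)F_A$, while $s_l=3$ and $s_l=4$ select the off-diagonal entries $-\pi(p)(1+\tilde{A}^2)^{-1/2}$ and $(1+\tilde{A}^2)^{-1/2}\pi(p)$ respectively. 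The three constraints defining $\Gamma_k$ emerge naturally: since $\tilde{\pi}(p)$ is nonzero only in its $(1,1)$-block one must have $s_1\neq 4$ and $s_{2k}\neq 3$, and because the $(2,2)$-block of the commutator vanishes every excursion to the second row ($s_l=3$) must be immediately followed by a return ($s_{l+1}=4$).

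Next I would use the scaling homotopy $\tilde{F}(t)$, $t\to\infty$, together with Lemma \ref{homlem} to replace the off-diagonal blocks $(1+\tilde{A}^2)^{-1/2}$ by the smooth finite-rank projection $W_0$; the error vanishes in every $\ellL^p$-norm for $p$ larger than $\dim X$ as $t\to\infty$, so $\cc_k$ is unchanged, with the intermediate operators satisfying $\tilde{F}^2=1$ only modulo $q$-summable operators, which is sufficient by the remark following \eqref{fredprop}. After this replacement, entries with $s_l\in\{1,2\}$ carry the conormal kernel $K_A$ and entries with $s_l\in\{3,4\}$ carry the smooth kernel $W_0$; the split $[F_A,\pi(p)]=F_A\pi(p)-\pi(p)F_A$ into the two summands for $s_l=1,2$ is precisely what determines $i_l=l$ versus $i_l=l+1$ in $\Lambda(I)$, together with the sign contributions collected in $\iota(I)$.

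Finally, I would apply Theorem \ref{russotracethmeven} to each resulting word of integral operators to convert the operator supertrace into an absolutely convergent integral over $X^{2k}$; the required Schatten bounds at each step come from Theorem \ref{commsats} for the $F_A$-commutators and from smoothness of $W_0$ for the remaining factors. Grouping by the number $w$ of off-diagonal factors produces the decomposition $\sum_{w=0}^k$ of the claim. For $w=0$ the alternating sum of the $2^{2k}$ sequences with $s_l\in\{1,2\}$ reassembles into the kernel $(p(x_{j+1})-p(x_j))K_A(x_j,x_{j+1})$ of $[K_A,\pi(p)]$, giving the first integral in the stated compact form; for $w>0$ one reads off exactly $\iota(I)\,Q_I^p\,H_{A,I}$. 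The main obstacle is the combinatorial bookkeeping of signs and index relabellings encoded by $\Lambda(I)$ and $\iota(I)$, and verifying that the scaling homotopy is genuinely continuous in the Connes--Chern character; absolute convergence at every stage is guaranteed by Lemma \ref{wsumabsint}.
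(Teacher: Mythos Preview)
Your proposal is correct and follows the same route as the paper: Connes' index formula via Theorem~\ref{psumfred}, identification of the compressed operator with $p(F_{A,+}\otimes 1)p$ from the block form of $\tilde{\pi}$ and $\tilde{F}_A$, the scaling homotopy $t\to\infty$ combined with Lemma~\ref{homlem} to pass to $\tilde{F}+W$, and Theorem~\ref{russotracethmeven} to rewrite the supertrace as an integral over $X^{2k}$ with the combinatorics encoded by $\Gamma_k$. One small imprecision: Lemma~\ref{homlem} replaces not only the off-diagonal blocks $(1+\tilde{A}^2)^{-1/2}$ by $W_0$ but also the diagonal block $F_A$ by its $0$-homogeneous part (whose kernel is what the paper calls $K_A$); you use this correctly afterwards but state it only partially, and note that $\tilde{F}(t)^2=1$ holds exactly for every finite $t$, so your worry about ``only modulo $q$-summable'' applies just at the limit and is handled by Schatten-norm continuity of the trace rather than by the remark after~\eqref{fredprop}.
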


This index formula does in fact work whenever $p\in F^{q,r}_\alpha(X)\otimes \Ko(\C^N)$ for $q,r,\alpha$ satisfying the condition \eqref{degcond} as a consequence of that we are dealing with an explicit index formula that by Theorem \ref{commsats} is well defined. Observe that if the dimension of $X$ is odd, $\ind(p(F_{A,+}\otimes 1)p)=0$ for continuous $p$ since the Atiyah-Singer index theorem implies that it holds for smooth projection-valued functions. 

\begin{proof}
The first equality follows from Theorem \ref{connesindexeven} and Theorem \ref{psumfred} since 
\[\tilde{\pi}(p)(\tilde{F}_{A,+}\otimes 1)\tilde{\pi}(p)=\left(\pi(p)(F_{A,+}\otimes 1)\pi(p)\right)\oplus 0.\]
Recall that, for any $t>0$, the operator $A(t)$ satisfies the same conditions as $A$ so by homotopy invariance of the Chern-Connes character and Lemma \ref{homlem}, 
\begin{align*}
\langle &\cc_k(\tilde{\pi}, \tilde{F}_{A}),p\rangle_{2k}=\langle \cc_k(\tilde{\pi}, \tilde{F}_{A(t)}),p\rangle_{2k}=\\
&=(-1)^k\suptra_{L^2(X,\Eg\otimes \C^N)}\left(\tilde{\pi}(p)[\tilde{F}_{A(t)},\tilde{\pi}(p)]\cdots [\tilde{F}_{A(t)},\tilde{\pi}(p)]\right)=\\
&=(-1)^k\suptra_{L^2(X,\Eg\otimes \C^N)}\left(\tilde{\pi}(p)[\tilde{F}+W,\tilde{\pi}(p)]\cdots [\tilde{F}+W,\tilde{\pi}(p)]\right)+\mathcal{O}(t^{-1}).
\end{align*}
Since $p$ is H\"older continuous, $(p(x_{j+1})-p(x_j))K_A(x_{j},x_{j+1})$ is locally integrable and of finite mixed $L^{(q',q)}$-norm by Theorem \ref{commsats}. Let us set $\tilde{p}:=\tilde{\pi}(p)$ and $\tilde{K}_A^W:= \tilde{K}_A+W$. Theorem \ref{russothmeven} and the calculations above imply the equality
\begin{align*}
&\suptra_{L^2(X,\Eg\otimes \C^N)}\left(\tilde{\pi}(p)[\tilde{F}+W,\tilde{\pi}(p)]\cdots [\tilde{F}+W,\tilde{\pi}(p)]\right)=\\
&\quad=\int _{X^{2k}} \suptra\left(\tilde{p}(x_1)\prod_{j=1}^{2k}\left(\tilde{p}(x_{j+1})\tilde{K}_A^W(x_{j},x_{j+1})-\tilde{K}_A^W(x_{j},x_{j+1})\tilde{p}(x_j)\right)\right)\rd V_{X^{2k}}\\
&\quad=\int _{X^{2k}} \sum_{(s_l)_{l=1}^{2k}\in \Gamma_k} \iota(I)Q_{I}^p(x_1,\ldots,x_{2k}) H_{A,I}(x_1,\ldots ,x_{2k})\rd V_{X^{2k}}.
\end{align*}
While the term $\langle \cc_k(\tilde{\pi}, \tilde{F}_{A}),p\rangle_{k}$ is constant, the second identity stated in the theorem follows from Lemma \ref{kalkdia} by letting $t\to \infty$. 
\end{proof}

\section{Degrees of H\"older continuous mappings}

Returning now to the degree calculations, we will use Theorem \ref{holderindexeven} for a particular choice of differential operator. Assume that $X$ is a compact Riemannian oriented manifold of dimension $2n$ without boundary. The Hodge grading on $\bigwedge^*T^*X\otimes \C$ is defined by the involution $\tau$ defined on a $p$-form $\omega$ by 
\[\tau \omega=i^{p(p-1)+n}*\omega,\] 
where $*$ denotes the Hodge duality. Observe that if $(e_j)_{j=1}^{2n}$ is an oriented, orthonormal basis of the cotangent space, the operator $\tau$ can be written as 
\begin{equation}
\label{tauex}
\tau=\left(\frac{i}{2}\right)^n\prod_{j=1}^{2n}(e_j\wedge-e_j\neg). 
\end{equation}
We let $E_+$ denote the sub-bundle of $\bigwedge^*T^*X\otimes \C$ consisting of even vectors with respect to the Hodge grading and $E_-$ the sub-bundle of odd vectors with respect to the Hodge grading. The operator $\tau$ anti-commutes with $\rd+\rd^*$ so $A=\rd+\rd^*$ is a well defined operator from $E_+$ to $E_-$. The operator $A$ is called the signature operator. Observe that $\tilde{A}=\rd+\rd^*$ as an operator on $\bigwedge^*T^*X\otimes\C$ and $\tilde{A}^2$ is the Laplace-Beltrami operator on $X$. By Theorem \ref{psumfred} the pair $(\tilde{\pi},\tilde{F}_{\rd+\rd^*})$ is a $q$-summable Fredholm module over $C^\alpha(X)$. 

Assume that $f:X\to Y$ is a H\"older continuous function where $Y$ is a $2n$-dimensional oriented manifold. We can choose an open subset $U\subseteq Y$ such that there is a diffeomorphism $\nu:U\to B_{2n}$. In fact, by the closed mapping lemma, since $X$ is compact and $Y$ Hausdorff, the mapping $f$ is open. Therefore, it is possible to choose $U$ such that there is an open set $U_0\subseteq X$ satisfying $U\subseteq f(U_0)$ and $E_+$ and $E_-$ are trivial over $U_0$.

Taking $\tilde{\nu}:Y\to \C^{n}$ as the Lipschitz continuous function defined in \eqref{nuexteven} we can for $k>n/\alpha$ define the integrable function $\tilde{f}_k\in L^1(X^{2k})$ as:
\begin{align}
\label{tildef}
&\tilde{f}_k(x_1,\ldots, x_{2k}):=\\
\nonumber
&= 2^{1-n}\sum_{I\in \Gamma_k} \iota(I)Q_{I}^{p_T}(\tilde{\nu}f(x_1),\ldots, \tilde{\nu}f(x_{2k})) H_{\rd+\rd^*,I}(x_1,\ldots ,x_{2k}),
\end{align}
where the second quantity is calculated through Lemma \ref{tracalc}. The kernel $H_{\rd+\rd^*,I}$ is in general quite hard to find. In local coordinates, the operator $K_{\rd+\rd^*}$ will be similar to a Riesz transform. The operator $W_0$ is the projection onto the finite-dimensional space of harmonic forms on $X$. We will demonstrate this by calculating the kernels $H_{\rd+\rd^*,I}$ explicitly on $S^{2n}$ in the next section.

\begin{sats}
\label{holderdegreeeven}
Suppose that $X$ and $Y$ are smooth compact connected oriented manifolds without boundary of dimension $2n$ and $f:X\to Y$ is H\"older continuous of exponent $\alpha$. When $k>n/\alpha$ the following integral formula holds:
\begin{align*}
\deg(f)&=2^{-n}\langle  \cc_k(\tilde{\pi}, \tilde{F}_{D}),f^*([p_Y]-2^{n-1}[1])\rangle_{2k}=\\
&=\frac{1}{2}\left((-1)^k\int _{X^{2k}}\tilde{f}_k(x_1,\ldots, x_{2k})\rd V_{X^{2k}}-\sign(X)\right)
\end{align*}
where $\tilde{f}_k$ is as in \eqref{tildef}.
\end{sats}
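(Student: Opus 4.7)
The plan is to combine Theorem \ref{smoothindex} with Theorem \ref{holderindexeven} applied to the signature operator $A = \rd+\rd^*$ and the H\"older continuous projection $p = f^*p_Y$, passing from smooth to H\"older continuous $f$ by homotopy invariance at the end.

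First I would prove the first equality for smooth $f$. By Theorem \ref{smoothindex}, $\ch_0[A]\deg(f) = \ind(A_{f^*p_Y}) - 2^{n-1}\ind(A)$. For the signature operator, Hirzebruch's signature theorem gives $\ind(A) = \sign(X)$, and the standard Atiyah-Singer computation of the Chern character of the symbol of $\rd+\rd^*$, using the grading \eqref{tauex}, yields $\ch_0[A] = 2^n$. Identity \eqref{afind} rewrites the twisted index as $\ind(A_{f^*p_Y}) = \ind((f^*p_Y)(F_{A,+}\otimes 1)(f^*p_Y))$, and Theorem \ref{holderindexeven} identifies the latter with the pairing $\langle \cc_k(\tilde{\pi},\tilde{F}_A), f^*[p_Y]\rangle_{2k}$. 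Subtracting the constant-projection contribution $\langle \cc_k(\tilde{\pi},\tilde{F}_A), 2^{n-1}[1]\rangle_{2k} = 2^{n-1}\sign(X)$ and dividing by $2^n$ delivers the first equality for smooth $f$.

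To extend this identity to a H\"older continuous $f$, I would invoke homotopy invariance of both sides: $\deg(f)$ is a classical homotopy invariant, and the right-hand pairing depends only on the class $f^*([p_Y]-2^{n-1}[1]) \in K_0(C^\alpha(X))$, which via the isomorphism $K_0(C^\alpha(X))\cong K_0(C(X))$ is determined by the continuous homotopy class of $f$. Since mollification produces a smooth map continuously homotopic to any H\"older map, the smooth identity carries over. For the second equality I would apply the explicit integral of Theorem \ref{holderindexeven} to $p = f^*p_Y = p_T \circ (\tilde{\nu}f)$: using $Q^{f^*p_Y}_I(x_1,\ldots,x_{2k}) = Q^{p_T}_I(\tilde{\nu}f(x_1),\ldots,\tilde{\nu}f(x_{2k}))$ together with the definition \eqref{tildef} of $\tilde{f}_k$, the sum over $I\in\Gamma_k$ collapses into $2^{n-1}\tilde{f}_k$; combining with $\langle \cc_k(\tilde{\pi},\tilde{F}_A), 2^{n-1}[1]\rangle_{2k} = 2^{n-1}\sign(X)$ and dividing by $2^n$ gives the second equality. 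The main obstacle I anticipate is the identification $\ch_0[\rd+\rd^*] = 2^n$; it is classical, but unwinding it cleanly from the symbol class of the signature operator via \eqref{tauex} takes some care.
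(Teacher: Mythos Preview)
Your proposal is correct and follows essentially the same route as the paper: apply Theorem \ref{smoothindex} to the signature operator (using $\ch_0[\rd+\rd^*]=2^nL_0(T^*X)=2^n$ and $\ind(\rd+\rd^*)=\sign(X)$), convert the twisted index to the pairing via Theorem \ref{holderindexeven}, and read off the integral from the definition \eqref{tildef}. The only difference is cosmetic: the paper invokes Theorem \ref{smoothindex} directly for H\"older $f$ (implicitly using its last clause with the order-$0$ operator $F_{\rd+\rd^*}$ in place of $\rd+\rd^*$), whereas you make the smooth-to-H\"older passage explicit via homotopy invariance; both are fine, and your anticipated ``obstacle'' $\ch_0[\rd+\rd^*]=2^n$ is exactly what the paper dispatches in one line via the $L$-genus.
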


\begin{proof}
While $F_{\rd+\rd^*}=(\rd+\rd ^*)(1+\Delta)^{-1/2}$, we have that 
\[\ind (F_{\rd+\rd^*,+})=\ind (\rd +\rd^*)=\sign(X).\] 
The operator $\rd +\rd^*$ satisfies $\ch_0[\rd +\rd^*]=2^nL_0(T^*X)=2^n$, since the constant term in the $L$-genus is $1$. Therefore Theorem \ref{smoothindex} and Theorem \ref{holderindexeven} implies that 
\begin{align*}
\deg(f)&=2^{-n}\ind ((p_Y\circ f\otimes \id)F_{\rd+\rd^*,+}(p_Y\circ f\otimes \id))-2^{-n}\ind (F_{\rd+\rd^*,+})=\\
&=2^{-n}\langle \cc_k(\tilde{\pi}, \tilde{F}_A),f^*p_Y\rangle_{2k}-\frac{1}{2}\sign(X)=\\
&=\frac{1}{2}(-1)^k\int _{X^{2k}}\tilde{f}_k(x_1,\ldots, x_{2k})\rd V_{X^{2k}}-\frac{1}{2}\sign(X).
\end{align*}
\end{proof}

A couple of remarks on the choice of $A$ as the signature operator are in order. This choice is rather superfluous since any pseudo-differential operator $A$ of order $1$ with $\ch_0[A]\neq 0$ will give a degree formula similar to that in Theorem \ref{holderdegreeeven}. If one can find an invertible $A$ on $X$ such that $\ch_0[A]\neq 0$, the formula of Theorem \ref{holderdegreeeven} would be much simpler since $\tilde{f}_k$ will not contain any contributions from $\Gamma_k\setminus \Gamma_k^0$. 

The signature operator has been studied on Lipschitz manifolds, see \cite{teleman}, which gives an analytic degree formula for H\"older continuous mappings between even-dimensional Lipschitz manifolds. On Lipschitz manifolds, the Atiyah-Singer theorem is replaced by Teleman's index theorem from \cite{teltva}. Of course, there are some analytic difficulties in the proof of Theorem \ref{commsats} for Lipschitz manifolds, that more or less manifests themselves on a notational level.   

If $X$ and $Y$ are compact manifolds, we define the space $F^{q,r}_\alpha(X,Y)$ in terms of an embedding $Y\subseteq \R^N$ and give it the topology induced from $F^{q,r}_\alpha(X,\R^N)$. As a corollary of the previous theorem we obtain the following result:

\begin{cor}
If $X$ and $Y$ are compact, oriented manifolds of the same dimension and $f\in F^{q,r}_\alpha(X,Y)$ then 
\[|\deg(f)|\leq \tilde{c}^{2k}_{X,Y}+c_{X,Y}^{2k}\|J_\alpha(f)\|_{(r,q)}^{2k},\]
whenever $2k\geq q$ and $\alpha,q,r$ satisfies condition \eqref{degcond}.
\end{cor}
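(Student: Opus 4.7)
The plan is to extract from the proof of Theorem~\ref{holderdegreeeven} the identity
\[
\deg(f) = 2^{-n}\langle \cc_k(\tilde\pi, \tilde F_{\rd+\rd^*}), f^*p_Y\rangle_{2k} - \tfrac12\sign(X)
\]
and to bound the cyclic pairing in terms of $\|J_\alpha(f)\|_{(r,q)}^{2k}$ using a Schatten H\"older estimate together with Theorem~\ref{commsats}. Unwinding the pairing via the definition of the Connes--Chern character and the Chern character formula \eqref{cycchern} gives
\[
\langle \cc_k(\tilde\pi, \tilde F_{\rd+\rd^*}), f^*p_Y\rangle_{2k} = (-1)^k\suptra\!\left(\tilde\pi(f^*p_Y)\prod_{j=1}^{2k}[\tilde F_{\rd+\rd^*},\tilde\pi(f^*p_Y)]\right),
\]
so the problem reduces to a quantitative bound for the trace norm of the operator product inside the supertrace.

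For this, since $2k\geq q$, the Schatten class inclusion $\ellL^q\subseteq \ellL^{2k}$ with $\|\cdot\|_{2k}\leq \|\cdot\|_q$ combined with H\"older's inequality for the Schatten ideals yields
\[
\left\|\prod_{j=1}^{2k}[\tilde F_{\rd+\rd^*},\tilde\pi(f^*p_Y)]\right\|_{\ellL^1}\leq \|[\tilde F_{\rd+\rd^*},\tilde\pi(f^*p_Y)]\|_{\ellL^q}^{2k}.
\]
Applying Theorem~\ref{commsats} entry-wise to the matrix-valued function $f^*p_Y$ gives the bound $\|[\tilde F_{\rd+\rd^*},\tilde\pi(f^*p_Y)]\|_{\ellL^q}\lesssim \|J_\alpha(f^*p_Y)\|_{(r,q)}$ with a constant depending only on $X$. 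Since the quantity $rq/(r+q)$ is monotone in $q$, condition \eqref{degcond} is preserved under enlarging $q$, so together with $2k\geq q$ it legitimizes this application.

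To convert $J_\alpha(f^*p_Y)$ into $J_\alpha(f)$, we use that $p_Y$ is of class $C^1$ on $Y$ by its construction in Section~2, hence Lipschitz with respect to any ambient embedding $Y\subseteq \R^N$; if $L_Y$ denotes its Lipschitz constant we have $J_\alpha(f^*p_Y)\leq L_Y J_\alpha(f)$ pointwise and therefore $\|J_\alpha(f^*p_Y)\|_{(r,q)}\leq L_Y\|J_\alpha(f)\|_{(r,q)}$. Combining the three estimates and absorbing $\tfrac12|\sign(X)|$ together with the geometric constants $2^{-n}$, $\|p_Y\|_\infty$, and $L_Y$ into new constants $\tilde c_{X,Y}$ and $c_{X,Y}$ (possibly depending on $k$) gives the claimed inequality. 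The main conceptual step is the Schatten H\"older bound combined with Theorem~\ref{commsats}; the rest is bookkeeping.
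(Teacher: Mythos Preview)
Your approach is the natural one and is what the paper implicitly intends (it states the corollary without proof). There is, however, one genuine omission. Theorem~\ref{commsats} controls commutators $[F,\pi(a)]$ for an order-$0$ pseudo-differential operator $F$ with the \emph{full} multiplication representation, but the Fredholm module uses the degenerate representation $\tilde\pi=\pi\oplus 0$. By \eqref{ffive} the commutator $[\tilde F_{\rd+\rd^*},\tilde\pi(a)]$ has, in addition to the block $[F_A,\pi(a)]$, the off-diagonal entries $\pm\pi(a)(1+\tilde A^2)^{-1/2}$, which are not of commutator type. These are still in $\ellL^q$ by Lemma~\ref{firstschatten} (condition~\eqref{degcond} forces $q>\dim X$), but their Schatten norm is controlled by $\|a\|_\infty\,\|(1+\tilde A^2)^{-1/2}\|_{\ellL^q}$, not by $\|J_\alpha(a)\|_{(r,q)}$. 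The correct estimate is therefore
\[
\bigl\|[\tilde F_{\rd+\rd^*},\tilde\pi(f^*p_Y)]\bigr\|_{\ellL^q}\;\lesssim\;\|J_\alpha(f^*p_Y)\|_{(r,q)}+1,
\]
the additive constant coming from $\|f^*p_Y\|_\infty=1$. After raising to the $2k$-th power, $(a+b)^{2k}\le 2^{2k-1}(a^{2k}+b^{2k})$ absorbs this constant into $\tilde c_{X,Y}^{2k}$, so the conclusion is unaffected; but the bound $\|[\tilde F,\tilde\pi(f^*p_Y)]\|_{\ellL^q}\lesssim\|J_\alpha(f^*p_Y)\|_{(r,q)}$ as you wrote it is false.

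Two smaller remarks. The projection $p_Y=\tilde\nu^*p_T$ is only Lipschitz in the paper, not $C^1$, since $\tilde\nu$ is merely Lipschitz; Lipschitz is all you need for the pointwise inequality $J_\alpha(f^*p_Y)\le L_Y\,J_\alpha(f)$. And your monotonicity comment on $rq/(r+q)$ is superfluous: you apply Theorem~\ref{commsats} at the given $q$ and then use $\ellL^q\subseteq\ellL^{2k}$, so no verification of \eqref{degcond} at the value $2k$ is needed.
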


\section{Example on $S^{2n}$}

Let us end this paper by writing down the integral kernels in the case of a H\"older continuous function $f:S^{2n}\to Y$ where $Y$ is a compact, connected $2n$-dimensional manifold without boundary. We will compare the method of using pseudo-differential operators from Theorem \ref{holderdegreeeven} with that of using Henkin-Ramirez kernels from \cite{mgoffodd}. The degree formula of \cite{mgoffodd} is based on the usage of the cocycle \eqref{oddcycle}. 

\subsection{Degrees on $S^{2n}$ using Theorem \ref{holderdegreeeven}}
To apply Theorem \ref{holderdegreeeven} the kernels $H_{\rd+\rd^*,I}$ need to be calculated. Define $U_0:L^2(S^{2n}, \bigwedge T^*S^{2n}\otimes \C)\to L^2(\R^{2n},\bigwedge T^*\R^{2n}\otimes \C)$ by pulling back along the mapping $\lambda:\R^{2n}\to S^{2n}$ defined by 
\[\R^{2n}\ni x\mapsto \left(\frac{|x|^2-1}{|x|^2+1},\frac{2x}{|x|^2+1}\right)\in S^{2n}\subseteq \R^{2n+1}\]
and equipping $\R^{2n}$ with the pull-back metric. Since the metric is positive definite we can define the unitary mapping $U:L^2(S^{2n}, \bigwedge T^*S^{2n}\otimes \C)\to L^2(\R^{2n})\otimes\C^{2^{2n}}$ by composing $U_0$ with the unitary mapping $L^2(\R^{2n}, \bigwedge T^*\R^{2n}\otimes \C)\to L^2(\R^{2n})\otimes\C^{2^{2n}}$ defined by the metric. Clearly, we have that $UK_{\rd+\rd^*}U^*$ is a pseudo-differential operator on $\R^{2n}$ with symbol $\xi\mapsto i(\xi\wedge-\xi\neg)/\sqrt{2}|\xi|$, if we identify $\C^{2^{2n}}$ with $\bigwedge \C^{2n}$. For any $a\in C^\alpha(S^{2n})$ we have that $UaU^*=a\circ \lambda$.

Let us find the integral kernel $K_1$ of $UK_{\rd+\rd^*}U^*$. Since the symbol of $K_1$ is a homogeneous function that commutes with the $SU(2n)$-action on $\bigwedge\C^{2n}$, the integral kernel $K_1$ is given by 
\[K_1(x,y)=c_n\frac{(x-y)\wedge-(x-y)\neg}{\sqrt{2}|x-y|^{2n+1}}\]
and the constant $c_n=(n-1)!/\pi^n$ is calculated in Chapter III in \cite{stein}. So the operator defined by $K_1$ is a matrix of Riesz transforms. While the harmonic forms on $S^{2n}$ are spanned by the constant function and the volume form, the kernel $W_0$ is the constant projection $\bigwedge T^*S^{2n}\to \C\oplus\bigwedge^{2n}T^*S^{2n}$. So we can write the kernel $K_3$ of $UW_0U^*$ as
\[K_3(x,y)=g(x)g(y)(1+\prod_{j=1}^{2n}e_j\wedge e_j\neg),\]
where 
\[g(x):=\frac{2n\pi^nn!(n-2)!}{(2n+1)!(1+|x|^2)^{n}}.\] 
Using these expressions for $K_1$ and $K_3$ it follows from \eqref{tauex} that 
\begin{align*}
H_{\rd+\rd^*,I}(x_1,\ldots ,x_{2k})&= \suptra\left(\prod_{l=1}^{2k}K_{s_l}(x_{l},x_{l+1})\right)=\tra\left(\tau\prod_{l=1}^{2k}K_{s_l}(x_{l},x_{l+1})\right)= \\
&=\sum_{\sigma\in S_{2(k-w)}}\sum_{m=0}^n  i^n\sign(\sigma) H_{\sigma,m,I}(x_1,\ldots ,x_{2k}),
\end{align*}
when $w(I)=w$ and where
\begin{align*}
H_{\sigma,m,I}(x_1,\ldots &,x_{2k}):=c_n^{2k-m}\prod_{s_l\neq 1,2} g(x_l)g(x_{l+1})\prod_{l=1, s_l=1,2}^m \frac{\langle e_j, x_{\sigma(l)}-x_{\sigma(l)+1}\rangle}{|x_{\sigma(l)}-x_{\sigma(l)+1}|^{2n+1}}\cdot\\
&\quad\cdot\prod_{l=m+1, s_l=1,2}^{k} \frac{\langle x_{\sigma(l)}-x_{\sigma(l)+1}, x_{\sigma(l+1)}-x_{\sigma(l+1)+1}\rangle}{|x_{\sigma(l)}-x_{\sigma(l)+1}|^{2n+1}|x_{\sigma(l+1)}-x_{\sigma(l+1)+1}|^{2n+1}}.
\end{align*}
By this notation we mean that an element $\sigma$ in the symmetric group $S_{2(k-w)}$, on $2(k-w)$ elements, acts on the indices $l$ such that $s_l=1,2$. Here we use the notation $\langle\cdot,\cdot\rangle$ for the scalar product. For a H\"older continuous function $f:S^{2n}\to Y$ we take an open set $U\subseteq Y$ such that there is a diffeomorphism $\nu:U\to B_{2n}$ and consider the H\"older continuous function $f_0:=\tilde{\nu} f\lambda:\R^{2n}\to S^{2n}$, where $\tilde{\nu}$ is as in equation \eqref{nuexteven}. The signature of a sphere is $0$ so Theorem \ref{holderdegreeeven} and \eqref{tildef} implies the following degree formula.

\begin{prop}
If $f:S^{2n}\to Y$ is H\"older continuous of exponent $\alpha$, where $Y$ is a closed oriented connected $2n$-dimensional manifold, the degree of $f$ can computed from the formula  
\begin{align*}
&\deg(f)=\\
&=\frac{(-1)^k k!}{2^n}\sum_{w=0}^k\int _{\R^{4nk}}\sum_{I\in \Gamma_k^w} \sum_{\sigma\in S_{2(k-w)}}\sum_{m=0}^n i^n \sign(\sigma)c_n^{2k-m}\iota(I)\cdot\prod_{s_l\neq 1,2} g(x_l)g(x_{l+1})\cdot\\
&\qquad\qquad\cdot \ch[p_T](f_0(x_1),f_0(x_{i_1}),\ldots f_0(x_{i_{2k}}))\cdot\prod_{l=1, s_l=1,2}^m \frac{\langle e_j, x_{\sigma(l)}-x_{\sigma(l)+1}\rangle}{|x_{\sigma(l)}-x_{\sigma(l)+1}|^{2n+1}}\cdot\\
&\qquad\qquad\cdot \prod_{l=m+1, s_l=1,2}^{k} \frac{\langle x_{\sigma(l)}-x_{\sigma(l)+1}, x_{\sigma(l+1)}-x_{\sigma(l+1)+1}\rangle}{|x_{\sigma(l)}-x_{\sigma(l)+1}|^{2n+1}|x_{\sigma(l+1)}-x_{\sigma(l+1)+1}|^{2n+1}}\;\rd V_{\R^{2nk}}.
\end{align*}
\end{prop}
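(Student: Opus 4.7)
The plan is to apply Theorem \ref{holderdegreeeven} directly, substituting the explicit kernel computations already carried out in the preceding paragraphs for the case $X = S^{2n}$, and using two special simplifications for the sphere.

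First, I would note the two simplifications specific to $S^{2n}$: the signature $\sign(S^{2n}) = 0$ vanishes (since $H^n(S^{2n}) = 0$ for $n \geq 1$), and the unitary pull-back $U$ along the stereographic projection $\lambda: \R^{2n} \to S^{2n}$ reduces all integrals and operators on $S^{2n}$ to the model space $\R^{2n}$. With this reduction, the kernel of $UK_{\rd+\rd^*}U^*$ is the matrix of Riesz transforms $K_1(x,y) = c_n (x-y)\wedge - (x-y)\neg\, / (\sqrt 2|x-y|^{2n+1})$, while $UW_0U^*$ has kernel $K_3$ expressed through the conformal factor $g(x)$ from the round metric on $\R^{2n}$.

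Next, starting from the formula in Theorem \ref{holderdegreeeven},
\[
\deg(f) = \frac{(-1)^k}{2} \int_{X^{2k}} \tilde f_k(x_1,\ldots,x_{2k})\, \rd V_{X^{2k}} - \frac{1}{2}\sign(X),
\]
I would drop the $\sign(X)$ term and substitute the definition \eqref{tildef} of $\tilde f_k$. This expands the integrand as a sum over $I\in \Gamma_k$ of products $\iota(I) Q_I^{p_T}(\tilde\nu f(x_1),\ldots,\tilde\nu f(x_{2k}))\, H_{\rd+\rd^*,I}(x_1,\ldots,x_{2k})$. Using the unitarity of $U$ and the fact that $U \pi(a) U^* = \pi(a\circ \lambda)$, every factor converts to an integral over $\R^{2nk}$ with $f$ replaced by $f_0 = \tilde\nu f \lambda$. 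The prefactor $2^{1-n}$ in \eqref{tildef} combines with the overall $1/2$ to give $2^{-n}$, and the $k!$ enters through the normalization of the cyclic Chern character in Lemma \ref{tracalc} (recall $\ch[p_T]$ carries a factor $1/k!$ which, after clearing, yields the stated $k!$).

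Finally, I would insert the explicit expansion of $H_{\rd+\rd^*,I}$ derived above, namely
\[
H_{\rd+\rd^*,I} = \sum_{\sigma \in S_{2(k-w)}} \sum_{m=0}^n i^n \sign(\sigma)\, H_{\sigma,m,I},
\]
and rewrite $H_{\sigma,m,I}$ using the product of Riesz-kernel entries (for indices with $s_l = 1,2$) and projection factors $g(x_l)g(x_{l+1})$ (for indices with $s_l = 3,4$). After collecting the prefactors and reorganizing, the expression matches exactly the right-hand side of the proposition.

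The only steps requiring care are bookkeeping: (i) verifying that the combinatorial signs $\iota(I)$ and $\sign(\sigma)$ survive intact through the decomposition of $H_{\rd+\rd^*,I}$, and (ii) confirming absolute convergence of the resulting integrals, which is guaranteed by Lemma \ref{wsumabsint} applied term-by-term. The analytic content was already done in Theorem \ref{holderindexeven}; here the task is purely computational substitution into that master formula. The main obstacle is thus just tracking normalization constants ($c_n$, the Hodge-grading factor $i^n$ from \eqref{tauex}, the $k!$ from the Chern character, and the powers of $2$), none of which present any genuine difficulty.
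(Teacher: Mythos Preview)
Your proposal is correct and follows essentially the same route as the paper: the paper simply states that the signature of a sphere vanishes and that the formula then follows from Theorem \ref{holderdegreeeven} together with the definition \eqref{tildef} and the explicit kernel expansions of $H_{\rd+\rd^*,I}$ computed immediately before the proposition. Your account is in fact more detailed than the paper's one-line justification, and your tracking of the constants (the $k!$ from passing between $Q_I^{p_T}$ and $\ch[p_T]$, the $2^{-n}$ from combining $2^{1-n}$ with $1/2$) is accurate.
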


\subsection{Degrees on $S^{2n}$ using Theorem $4.4$ of \cite{mgoffodd}}

If we attempt using the degree formula of \cite{mgoffodd} to a function $f:S^{2n}\to Y$ we must in some way change the dimension. We will do so by finding a strictly pseudo-convex domain $\Omega$ in $\C^{n+1}$ such that $\partial\Omega=S^{2n}\times S^1$ and use the degree formula of \cite{mgoffodd} to calculate the degree of $f\times \id:S^{2n}\times S^1\to Y\times S^1$. To find such domain $\Omega$ we define the function $\rho\in C^\infty(\C^{n+1})$ as 
\[\rho(z_1,\ldots, z_{n+1}):=4|1-z_1z_2|^2+|z|^2-3.\]
We let $\Omega:=\{z\in \C^{n+1}:\rho(z)<0\}$. Clearly, $\rho$ is strictly pluri-subharmonic in $\Omega$ and $\rd \rho\neq 0$ on $\partial\Omega$. Furthermore 
\[\rho(z)= 4(1-2\Re(z_1z_2)+|z_1z_2|^2)+|z|^2-3.\]
So $\Omega$ is a relatively compact strictly pseudo-convex domain in $\C^{n+1}$ with smooth boundary. Writing down the function $\rho$ in its real argument verifies that $\Omega\cong B_{2n+1}\times S^1$ and $\partial\Omega\cong S^{2n}\times S^1$.

Let us find the Henkin-Ramirez kernel $H_R$ of the boundary $\partial\Omega$. In order to do this we will use the Fornaess embedding theorem, see \cite{forn}. This approach to construct the Henkin-Ramirez kernel goes as follows; if we find a proper, holomorphic mapping from $\Omega$ into a convex domain, we obtain the holomorphic support function of $\partial\Omega$ by pulling back the support function in the convex domain. Define the holomorphic function $\Psi:\C^{n+1}\to \C^{n+2}$ by $(z_1,z_2, \ldots, z_{n+1})\mapsto (2(1-z_1z_2),z_1,z_2, \ldots, z_{n+1})$. Clearly $\Omega$ is mapped in to the ball of radius $3$, $3B_{2n+4}$,  and $\Omega=\Psi^{-1}(3B_{2n+4})$. Therefore $\Omega$ has the support function 
\begin{equation}
\label{nevher}
a(z,\zeta):=(\Psi(\zeta)-\Psi(z))\cdot \overline{\Psi(\zeta)}=\bar{\zeta}\cdot(\zeta-z)+4(\zeta_1\zeta_2-z_1z_2)\overline{\zeta_1\zeta_2}.
\end{equation}
Thus the integral kernel of the Henkin-Ramirez kernel for $S^{2n}\times S^1$ is given by 
\[H_R(z,\zeta)\rd V_{S^{2n}\times S^1}=\frac{1}{(2\pi i)^n}\frac{s\wedge(\bar{\partial} _\zeta s)^{2(n-1)}}{a(z,\zeta)^{n}},\]
where
\[s(z,\zeta)=\partial |\zeta|^2+2(1-\overline{\zeta_1\zeta_2})(z_2\rd \zeta_1+\zeta_1\rd \zeta_2).\]
Because of \eqref{nevher} the function $a(z,\zeta)$ is symmetric in the sense that for $z,\zeta\in \partial\Omega$ we have $a(z,\zeta)=\overline{a(\zeta,z)}$. However, the kernel $H_R$ is not symmetric as is seen from the expression for $s$.

Concluding the case of a H\"older continuous function $f:S^{2n}\to Y$, we take an open subset $U\subseteq Y$ diffeomorphic to a ball and choose a diffemorphism $\nu:U\times (S^1\setminus\{pt\})\to B_{2n+1}$. If we let $f_1:S^{2n}\times S^1\to Y\times S^1$ be the H\"older continuous function constructed by extending $\nu(f\times \id_{S^1})$ as in \eqref{nuexteven}, Theorem $4.4$ of \cite{mgoffodd} implies the next Proposition.

\begin{prop}
If $f:S^{2n}\to Y$ is H\"older continuous of exponent $\alpha$, where $Y$ is a closed oriented connected $2n$-dimensional manifold, the degree of $f$ can computed from the formula  
\begin{align*}
\deg(f)=\frac{(-1)^{n}}{(2\pi i)^{2kn}}\int_{(S^{2n}\times S^1)^{2k+1}} &\sum_{\sigma\in S_{2(2k-1)}}\sum _{l=0}^{2k-1} c_{l,\sigma}\prod_{i=1}^l\langle f_1(z_{\sigma(2i-1)}),f_1(z_{\sigma(2i)})\rangle \cdot\\
&\cdot\prod_{j=0}^{2k} \frac{s(z_{j},z_{j+1})\wedge(\bar{\partial} _{z_{j+1}} s(z_{j},z_{j+1}))^{2(n-1)}}{a(z_{j},z_{j+1})^{n}}.
\end{align*}
\end{prop}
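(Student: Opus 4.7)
The plan is to reduce the even-dimensional problem on $S^{2n}$ to the odd-dimensional setting of \cite{mgoffodd} by crossing with $S^1$. The starting point is the invariance of degree under multiplication by the identity: $\deg(f) = \deg(f \times \id_{S^1})$, which reduces matters to computing the degree of a Hölder continuous map between the $(2n+1)$-dimensional oriented manifolds $S^{2n} \times S^1$ and $Y \times S^1$. Since the strictly pseudo-convex domain $\Omega \subset \C^{n+1}$ constructed above satisfies $\partial \Omega \cong S^{2n} \times S^1$, one can apply Theorem $4.4$ of \cite{mgoffodd} directly to the boundary $\partial \Omega$.

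The core computational task is then to identify the Henkin-Ramirez kernel on $\partial \Omega$. First I would verify that $\rho$ is strictly plurisubharmonic on $\Omega$ with $d\rho \ne 0$ on $\partial \Omega$, so that $\Omega$ is indeed a strictly pseudo-convex domain with smooth boundary. Next, I would apply the Fornaess embedding theorem (see \cite{forn}) through the explicit proper holomorphic embedding $\Psi : \C^{n+1} \to \C^{n+2}$, $(z_1,\ldots,z_{n+1}) \mapsto (2(1-z_1 z_2), z_1, \ldots, z_{n+1})$, and use the fact that the image lies inside the convex ball $3B_{2n+4}$ with $\Omega = \Psi^{-1}(3 B_{2n+4})$. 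Pulling back the standard support function for the ball yields the holomorphic support function $a(z,\zeta) = (\Psi(\zeta) - \Psi(z)) \cdot \overline{\Psi(\zeta)}$, from which the Henkin-Ramirez kernel $H_R$ and the form $s(z,\zeta) = \partial |\zeta|^2 + 2(1-\overline{\zeta_1 \zeta_2})(z_2 d\zeta_1 + \zeta_1 d\zeta_2)$ emerge after a direct differentiation.

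Once the kernel is in hand, the final step is to transport the Hölder map along the diffeomorphism $\nu: U \times (S^1 \setminus \{pt\}) \to B_{2n+1}$ into a coordinate neighbourhood of $Y \times S^1$, and then to extend across the singularity using the construction \eqref{nuexteven} exactly as in the odd-dimensional case, producing the Lipschitz function $f_1: S^{2n} \times S^1 \to Y \times S^1$ with $\deg(f_1) = \deg(f \times \id_{S^1}) = \deg(f)$. Substituting $H_R$ and $f_1$ into the formula of Theorem $4.4$ of \cite{mgoffodd} and collecting the iterated sums over $S_{2(2k-1)}$ and over the decomposition of index sequences (which produce the coefficients $c_{l,\sigma}$) gives precisely the stated integral expression.

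The main obstacle is bookkeeping rather than conceptual: one must ensure that the combinatorial constants $c_{l,\sigma}$ and the normalization $(2\pi i)^{-2kn}$ match those coming from the Connes-Chern character formula underlying Theorem $4.4$ of \cite{mgoffodd}, and that the products $\langle f_1(z_{\sigma(2i-1)}), f_1(z_{\sigma(2i)})\rangle$ indeed arise from the Chern character terms $\ch[p]$ pulled back along $f_1$. A subtler analytic point is that the Henkin-Ramirez kernel $H_R$ is not symmetric in its two arguments (as visible from the form of $s$), so one must keep track of the ordering $(z_j, z_{j+1})$ carefully; however, the $(q',q)$-mixed norm estimates from Theorem \ref{russothmeven} together with the Hölder regularity of $f_1$ guarantee absolute convergence of every integral, so no regularization is needed once the symbolic manipulation is carried out consistently.
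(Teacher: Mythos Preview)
Your proposal is correct and follows essentially the same route as the paper: reduce to the odd-dimensional case via $\deg(f)=\deg(f\times\id_{S^1})$, realize $S^{2n}\times S^1$ as the boundary of the explicitly constructed strictly pseudo-convex domain $\Omega$, compute the Henkin--Ramirez kernel through the Fornaess embedding $\Psi$, and then invoke Theorem~4.4 of \cite{mgoffodd}. One small slip: $f_1$ is only H\"older continuous (not Lipschitz), since $\tilde{\nu}$ is Lipschitz but $f$ is merely $C^\alpha$; and the target of $f_1$ (as used in the inner products of the formula) is the sphere $S^{2n+1}$ rather than $Y\times S^1$ itself.
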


Here $c_{l,\sigma}=(-1)^l2^{n-l-1}\epsilon_l(\sigma)$ and $\epsilon_l(\sigma)$ is the order parity of $\sigma$. For details see \cite{mgoffodd}.

\newpage
\small

\end{document}